\newtheorem{theorem}{Theorem}
\theoremstyle{plain}
\newtheorem{conjecture}{Conjecture}
\newtheorem{corollary}{Corollary}
\newtheorem{lemma}{Lemma}
\numberwithin{equation}{section}
\begin{document}

{\footnotesize%
\hfill
}

  \vskip 1.2 true cm

\begin{center} {\bf On the limiting extremal vanishing for configuration spaces} \\
          {by}\\
{\sc Muhammad Yameen}
\end{center}

\pagestyle{myheadings}
\markboth{Limiting extremal vanishing for configuration spaces}{Muhammad Yameen}

\begin{abstract}
 We study the limiting behavior of extremal cohomology groups of $k$-points configuration spaces of complex projective spaces of complex dimension $m\geq 4.$ In the previous work, we prove that the extremal cohomology groups of degrees $(2m-2)k+i$ are eventually vanish for each $i\in\{1,2,3\}.$ In this paper, we investigate the extremal cohomology groups for non-positive integers, and show that these cohomology groups are eventually vanish for $i\in\{-1,-2,0\}.$ As an application, we confirm the validity of more general question of Knudsen, Miller and Tosteson for non-positive integers.

We give a certain families of unstable cohomology groups, which are not eventually vanish. The degrees of these families of cohomology groups are depend on the number of points and the dimension of projective spaces.
 
We formulate the conjecture that the cohomology groups of higher slopes are eventually vanish.
\end{abstract}

\begin{quotation}
\noindent{\bf Key Words}: {Configuration spaces, Extremal vanishing, Limiting extremal vanishing, Extremal stability, Hilbert function, Reduced Chevalley–Eilenberg complex}

\noindent{\bf 2010 Mathematics Subject Classification}:  Primary 55R80, Secondary 55P62.
\end{quotation}

\thispagestyle{empty}

\section{Introduction}

\label{sec:intro}
For any connected manifold $\mathscr{M}$ of finite type (Betti numbers are finite), the space
$$\mathscr{F}_{k}(\mathscr{M}):=\{(x_{1},\ldots,x_{k})\in \mathscr{M}^{k}| x_{i}\neq x_{j}\,for\,i\neq j\}$$
is called the configuration space of $k$ distinct ordered points in $\mathscr{M}.$ The symmetric group $\mathscr{S}_{k}$ acts on $\mathscr{F}_{k}(\mathscr{M})$ by permuting the coordinates. This action is transitive and the orbit $$\mathscr{C}_{k}(\mathscr{M}):=\mathscr{F}_{k}(\mathscr{M})/\mathscr{S}_{k} $$
is the unordered configuration space. Configuration spaces, that are parameter spaces for reduced zero-cycles on manifolds, are cornerstones objects in topology, source of several crucial topological data. For example, when the base manifold if the affine plane, the fundamental groups are the so-called braid groups which are fundamental in geometric group theory. Braid spaces or configuration spaces of unordered pairwise distinct points on manifolds have important applications to a number of areas of mathematics, physics and computer sciences.  

It is a fundamental problem in algebraic topology to understand the homological properties of such spaces. The homological stability of the spaces $\mathscr{C}_{k}(\mathscr{M})$ proved by McDuff \cite{MD}, Segal \cite{S} and Church \cite{C}: for each $i\geq0$ the function $$k\mapsto \text{dim}H_{i}(\mathscr{C}_{k}(\mathscr{M});\mathbb{Q})$$
is eventually constant. This result was extended by Randal-William \cite{RW} and Knudsen \cite{Kn}. 

More recently, Knudsen, Miller and Tosteson \cite{KMT} study the extremal stability of the spaces $\mathscr{C}_{k}(\mathscr{M})$: for each $i\geq0$ the function $$k\mapsto \text{dim}H_{\nu_{k}-i}(\mathscr{C}_{k}(\mathscr{M});\mathbb{Q})$$
is eventually a quasi-polynomial, where $\nu_{k}=(d-1)k+1$ and $dim(\mathscr{M})=d.$ They asked the following question:\\\\
\textbf{Question.} (see Question 4.10 of \cite{KMT}) Suppose that $H_{d-1}(\mathscr{M};\mathbb{Q})=0.$ For $i\in\mathbb{N},$ is the Hilbert function 
$$k\mapsto \text{dim}H_{k(d-2)+i}(\mathscr{C}_{k}(\mathscr{M});\mathbb{Q})$$
eventually a quasi-polynomial?\\\\

For the definition of quasi-polynomial, see section 2.3 of \cite{KMT}. Since the initial draft of the paper of Knudsen, Miller and Tosteson \cite{KMT} appeared, the author of this paper has answered the above question in the affirmative in the case of complex projective spaces:
\begin{theorem}\label{maino}\cite{Y}
For $i,m\in\mathbb{N},$ the Hilbert function 
$$k\mapsto \emph{dim}H_{k(2m-2)+i}(\mathscr{C}_{k}(\emph{CP}^{m});\mathbb{Q})$$
is eventually a quasi-polynomial.
\end{theorem}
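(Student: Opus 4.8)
The plan is to reduce the statement to a Hilbert-series computation via the rational homotopy model for configuration spaces of $\mathbb{CP}^m$. Since $\mathbb{CP}^m$ is a closed orientable manifold of even dimension $2m$ whose cohomology is concentrated in even degrees, the Knudsen model for $\bigoplus_{k\ge0}H_*(\mathscr{C}_k(\mathbb{CP}^m);\mathbb{Q})$ (resting on the earlier models of B\"odigheimer--Cohen--Taylor and F\'elix--Thomas, and made completely explicit here because $\mathbb{CP}^m$ is formal) takes the form of a finitely generated bigraded differential graded-commutative algebra
$$
(E,\partial)=\Big(\mathbb{Q}[a_0,\dots,a_m]\otimes\Lambda[b_0,\dots,b_m],\,\partial\Big),
$$
where $a_j$ has weight $1$ and homological degree $2j$, $b_j$ has weight $2$ and homological degree $2m-1+2j$, the differential kills every $a_j$, and $\partial b_j$ is a quadratic expression in the $a_j$'s prescribed by the cup product and Poincar\'e duality of $\mathbb{CP}^m$. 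The weight-$k$ summand $E_k$ is finite-dimensional and satisfies $H_*(E_k)\cong H_*(\mathscr{C}_k(\mathbb{CP}^m);\mathbb{Q})$.

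First I would observe that, since $\partial$ annihilates all the polynomial generators (and these have even homological degree), $\partial$ is $R$-linear for the Noetherian ring $R:=\mathbb{Q}[a_0,\dots,a_m]$, over which $E$ is free of rank $2^{m+1}$. Hence $H_*(E)=\ker\partial/\mathrm{im}\,\partial$ is a finitely generated bigraded $R$-module, where $a_j$ carries bidegree $(\text{weight},\text{homological degree})=(1,2j)$. The theorem is the assertion that $k\mapsto\dim H_{k(2m-2)+i}(E_k)$ is eventually a quasi-polynomial. To extract this one-variable slice I would regrade: let $\ell$ be the linear functional $\ell(w,d):=d-(2m-2)w$ on bidegrees, so that $\ell(a_j)=2j-2m+2$, and decompose $R=\bigoplus_{n\in\mathbb{Z}}R^{(n)}$ along $\ell$. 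The degree-zero part $R^{(0)}=\mathbb{Q}[\,C_0\cap\mathbb{N}^{m+1}\,]$ is the affine semigroup ring of the cone $C_0=\{\,v\in\mathbb{R}^{m+1}_{\ge0}:\sum_j(2j-2m+2)v_j=0\,\}$, and each $R^{(n)}$ is a finitely generated $R^{(0)}$-module by the module form of Gordan's lemma. Therefore the $\ell$-degree-$i$ part $D_i:=\bigoplus_k H_{k(2m-2)+i}(E_k)$ of the finitely generated $R$-module $H_*(E)$ is itself a finitely generated graded $R^{(0)}$-module, graded by weight.

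Finally I would invoke Hilbert--Serre: since $C_0$ lies in the nonnegative orthant it is pointed, so $R^{(0)}$ is a positively graded affine semigroup ring, and any finitely generated graded $R^{(0)}$-module has Hilbert series of the form $\sum_k\dim(D_i)_k\,s^k=P_i(s)/\prod_r(1-s^{e_r})$ with each $e_r\ge1$; the coefficient sequence of such a rational function, whose poles are all roots of unity, is eventually a quasi-polynomial in $k$. Since $(D_i)_k=H_{k(2m-2)+i}(\mathscr{C}_k(\mathbb{CP}^m);\mathbb{Q})$, this proves the theorem. I expect the real work to be in the first paragraph --- pinning down the Knudsen model for $\mathbb{CP}^m$ in the correct bidegree conventions and verifying that it is module-finite over the polynomial subring $R$ with $R$-linear differential; granting that, the remaining steps are essentially formal, the only delicate point being to keep the cone $C_0$ pointed so that the Hilbert-series denominators are genuine products of factors $(1-s^{e_r})$ and one obtains quasi-polynomiality rather than mere rationality.
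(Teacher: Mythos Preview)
Your argument is correct and self-contained: the Chevalley--Eilenberg/Knudsen model for $\mathbb{CP}^m$ is module-finite and free over $R=\mathbb{Q}[a_0,\dots,a_m]$ with $R$-linear differential (each $a_j$ has even degree, so Leibniz gives no sign), so $H_*(E)$ is a finitely generated bigraded $R$-module; your slicing by $\ell(w,d)=d-(2m-2)w$ and the Gordan-lemma argument that each $R^{(n)}$ is finitely generated over the pointed affine semigroup ring $R^{(0)}$ are accurate, and Hilbert--Serre then yields quasi-polynomiality in $k$.

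This is, however, not the route taken in the paper (and in the cited \cite{Y}). There the theorem is obtained by proving something sharper: the groups $H^{k(2m-2)+i}(\mathscr{C}_k(\mathbb{CP}^m);\mathbb{Q})$ \emph{vanish} for all sufficiently large $k$ when $m>1$ and $i\in\mathbb{N}$, so the eventual quasi-polynomial is the zero polynomial. The method is explicit: one passes to the reduced complex ${}^{r}\Omega_k^{*,*}(\mathbb{CP}^m)$ by killing the acyclic ideal $(v_{2m}^2,w_{4m-1})$, shows that in degrees $(2m-2)k+i$ only a fixed finite window of weights $\omega$ contributes once $k$ is large, writes down the handful of basis monomials in each weight, and checks by hand that the resulting short complexes are exact (exactly the style of Lemmas~\ref{lemma1}--\ref{lemma3} here for $i\in\{-2,-1,0\}$). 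Your approach is more conceptual and in fact proves the statement uniformly for \emph{every} $i\in\mathbb{Z}$ at once --- so it would already answer Question~4.11 of \cite{KMTP} for $\mathbb{CP}^m$ without the degree-by-degree analysis --- but it does not detect the eventual vanishing, which is the real content the paper is after. Conversely, the paper's hands-on method gives the stronger vanishing conclusion and the exact threshold in $k$, at the cost of treating each $i$ separately.
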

Actually, the cohomology groups $$H^{k(2m-2)+i}(\mathscr{C}_{k}(\text{CP}^{m});\mathbb{Q})$$ are eventually vanished for $m>1$ and $i\in\mathbb{N}.$ In particular, we proved that these cohomology groups are entirely vanish for $i>\mu_{k}$ and $k\geq1,$ where $\mu_{k}=(2m-2)k+3.$ We called this vanishing is \emph{entire extremal vanishing}. Moreover, the cohomology groups $H^{k(2m-2)+i}(\mathscr{C}_{k}(\text{CP}^{m});\mathbb{Q})$ are eventually vanish for $i\in\{1,2,3\}.$ For small value of $k,$ these cohomology groups are not necessarily vanish. We called this vanishing is \emph{limiting extremal vanishing}. In the new published form of the paper, Knudsen, Miller and Tosteson \cite{KMTP} extent the above question from natural numbers to integer numbers:\\\\
\textbf{Question.} (see Question 4.11 of \cite{KMTP}) Suppose that $H_{d-1}(\mathscr{M};\mathbb{Q})=0.$ For $i\in\mathbb{Z},$ is the Hilbert function 
$$k\mapsto \text{dim}H_{k(d-2)+i}(\mathscr{C}_{k}(\mathscr{M});\mathbb{Q})$$
eventually a quasi-polynomial?\\\\ 
\textbf{Note:} We will consider the higher dimensional projective spaces $\text{CP}^{m\geq4}.$ The explicit computations for low dimensional projective spaces $\text{CP}^{m<4}$ are already present in the literature (see \cite{F-Ta}, \cite{RW2} and \cite{K-M}). The limiting behavior of the spaces $\mathscr{C}_{k}(\text{CP}^{1})$ and $\mathscr{C}_{k}(\text{CP}^{2})$ are also discussed by Vakil-Wood (see Conjecture H of \cite{VW}).

We investigate the limiting behavior of extremal cohomology groups for non-positive integers. The limiting extremal vanishing can be extent to non-positive value of $i:$
\begin{theorem}\label{main1}
For each $i\in\{-2,-1,0\}$ and $m\geq4,$ the sequence of cohomology groups  
$$\{H^{k(2m-2)+i}(\mathscr{C}_{k}(\emph{CP}^{m});\mathbb{Q})\}_{k=1}^{\infty}$$
is eventually vanish.
\end{theorem}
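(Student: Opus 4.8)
The plan is to compute the relevant extremal cohomology groups $H^{k(2m-2)+i}(\mathscr{C}_k(\mathrm{CP}^m);\mathbb{Q})$ via the reduced Chevalley–Eilenberg complex that computes the cohomology of unordered configuration spaces (as in the work of Knudsen and as used in the author's previous paper establishing Theorem \ref{maino}). The point is that the cohomology of $\coprod_k \mathscr{C}_k(\mathrm{CP}^m)$ is computed by a bigraded complex built out of the cohomology $H^*(\mathrm{CP}^m;\mathbb{Q})$ together with a shifted copy $H^{*-1}_c$ (the "twisted" part), with generators distributed in homological degrees according to whether they come from $H^0,\dots,H^{2m}$ of $\mathrm{CP}^m$; the extremal (top) part of the complex in weight $k$ sits in degrees near $(2m-2)k$, and the classes contributing to degrees $(2m-2)k+i$ for $i\in\{-2,-1,0\}$ come from a small, explicitly enumerable set of "near-extremal" monomials. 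First I would set up this complex precisely, isolate the sub-quotient complex responsible for total degree $(2m-2)k+i$, and write down an additive basis of that piece as a function of $k$.

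Second, I would identify the differential on this near-extremal piece. The relevant part of the differential is the one coming from the algebra structure / the Whitehead-type bracket on $\mathrm{CP}^m$, which on the top classes is largely forced by degree reasons; in the extremal range it typically either vanishes or is an isomorphism onto its image. So the bulk of the argument is a finite (in $m$, uniform in $k$) linear-algebra computation: show that for $k$ large the rank of the differential in and out of the degree-$(2m-2)k+i$ part equals the full dimension of that part, hence the cohomology there vanishes. The key structural input is the entire extremal vanishing from the previous work (the groups vanish for $i>\mu_k=(2m-2)k+3$), which pins down the top of the complex and gives a "boundary condition" that makes the induction on $i$ (running $i=3,2,1,0,-1,-2$, i.e. downward from the known vanishing range) go through. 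The case $m\geq 4$ is exactly the regime where the interfering classes from $H^{\leq 3}(\mathrm{CP}^m)$ and from the low-degree part of the twisted factor do not collide with the degree $(2m-2)k+i$ strip, so the enumeration stays manageable.

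The main obstacle I anticipate is the bookkeeping for $i=0$: at the exact slope-line degree $(2m-2)k$ there are the most near-extremal monomials (those using one extra factor of the degree-$2$ generator of $H^*(\mathrm{CP}^m)$, or one extra factor from the twisted side, balanced against one fewer point), and one must check that the differential genuinely kills all of them rather than leaving a $1$-dimensional residue that would oscillate with $k\bmod{\text{(period)}}$. Concretely I expect to need a careful count showing that the incoming differential (from degree $(2m-2)k-1$ classes in the same weight) together with the outgoing differential (to degree $(2m-2)k+1$, already known to be eventually acyclic here) exhausts the space, which reduces to an Euler-characteristic plus rank computation. A secondary subtlety is making the bound on "how large $k$ must be" uniform, but since everything is governed by finitely many monomial types this should follow once the ranks are computed. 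Finally, as an application I would record that Theorem \ref{main1} confirms Question 4.11 of \cite{KMTP} for $\mathscr{M}=\mathrm{CP}^m$, $m\geq 4$, and $i\in\{-2,-1,0\}$, since an eventually-vanishing sequence is in particular eventually a (zero) quasi-polynomial.
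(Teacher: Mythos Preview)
Your proposal is correct and follows essentially the same approach as the paper: enumerate the near-extremal monomials in the reduced Chevalley--Eilenberg complex for large $k$, write out the differential explicitly, and verify by a finite linear-algebra computation that the relevant pieces are acyclic. The paper organizes this by the strands $(\text{degree})+\omega=\text{const}$ (producing three short sub-complexes with dimension patterns $[2,6,6,2]$, $[1,3,3,1]$, and a surjection $[8]\to[3]$, valid for $k>8$) rather than by a downward induction on $i$, but the underlying rank computation is exactly what you describe.
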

As an application of Theorem \ref{main1}, we confirm the validity of the question of Knudsen, Miller and Tosteson for non-positive value of $i:$
\begin{corollary}\label{corollarymain}
For $m\geq4$ and $i\in\{-2,-1,0\}$ the Hilbert function 
$$k\mapsto \emph{dim}(H_{k(2m-2)+i}(\mathscr{C}_{k}(\emph{CP}^{m});\mathbb{Q}))$$
is eventually a quasi-polynomial.
\end{corollary}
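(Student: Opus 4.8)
The plan is to deduce Corollary \ref{corollarymain} directly from Theorem \ref{main1}, using only two elementary facts: that the rational homology and cohomology of a finite-type space have equal dimensions in each degree, and that the identically-zero function counts as a quasi-polynomial. First I would record the finite-type input: since $\text{CP}^m$ is a closed manifold, the ordered configuration space $\mathscr{F}_k(\text{CP}^m)$ is an open subset of the closed manifold $(\text{CP}^m)^k$, hence a manifold of finite type, and the unordered space $\mathscr{C}_k(\text{CP}^m)$, being a finite quotient of it, likewise has finite-dimensional rational (co)homology in every degree. The universal coefficient theorem then supplies, for all $n$ and $k$, a natural isomorphism $H_n(\mathscr{C}_k(\text{CP}^m);\mathbb{Q}) \cong \mathrm{Hom}_{\mathbb{Q}}\!\big(H^n(\mathscr{C}_k(\text{CP}^m);\mathbb{Q}),\mathbb{Q}\big)$, so in particular $\dim_{\mathbb{Q}} H_n(\mathscr{C}_k(\text{CP}^m);\mathbb{Q}) = \dim_{\mathbb{Q}} H^n(\mathscr{C}_k(\text{CP}^m);\mathbb{Q})$ for every $n$ and $k$.

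Now fix $m \geq 4$ and $i \in \{-2,-1,0\}$. Theorem \ref{main1} asserts that the sequence $\{H^{k(2m-2)+i}(\mathscr{C}_k(\text{CP}^m);\mathbb{Q})\}_{k=1}^{\infty}$ is eventually zero, i.e.\ there is an integer $K = K(m,i)$ with $H^{k(2m-2)+i}(\mathscr{C}_k(\text{CP}^m);\mathbb{Q}) = 0$ for all $k \geq K$. Applying the rank identity of the previous paragraph with $n = k(2m-2)+i$, we obtain $\dim_{\mathbb{Q}} H_{k(2m-2)+i}(\mathscr{C}_k(\text{CP}^m);\mathbb{Q}) = 0$ for every $k \geq K$. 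Hence the Hilbert function $k \mapsto \dim_{\mathbb{Q}} H_{k(2m-2)+i}(\mathscr{C}_k(\text{CP}^m);\mathbb{Q})$ agrees, for all $k \geq K$, with the constant quasi-polynomial $0$ (period $1$, value $0$). Since "eventually a quasi-polynomial" in the sense of \cite{KMT} constrains only the behavior of the sequence for large $k$, this proves the corollary.

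I do not anticipate any genuine obstacle beyond Theorem \ref{main1} itself; the only items to verify are the finite-type hypothesis, which is immediate from compactness of $\text{CP}^m$ and is exactly what makes the universal coefficient comparison legitimate, and the harmless observation that the zero function is a quasi-polynomial. If one wished to avoid the universal coefficient theorem, one could instead transport the vanishing through the rational Poincaré duality of the (orientable, finite-type) manifolds involved, or simply argue at the level of Poincaré polynomials; all routes give the same conclusion.
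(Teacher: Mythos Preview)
Your proof is correct and is essentially the same as the paper's, which simply records that the corollary is ``obvious from Theorem \ref{main1}.'' You have merely spelled out the two elementary facts (universal coefficients over $\mathbb{Q}$ and that the zero function is a quasi-polynomial) that the paper leaves implicit.
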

\begin{center}
\begin{picture}(250,150)
\put(30,20){\vector(0,1){120}} 
\put(20,138){$k$} \put(225,10){$i$}
\put(30,20){\vector(1,0){200}}
\put(30,20){\vector(3,4){90}}
\put(30,20){\vector(2,1){200}}
\put(30,20){\vector(4,1){205}}
\put(237,66){$k=\mu_{k}$}

\put(55,5){$\text{Figure 1. Extremal vanishings}$}
\put(35,110){$\text{Homological}$} \put(43,100){$\text{stability}$}
\put(161,78){$\text{Limiting}$} \put(202,78){$\text{vanishing}$}
\put(135,31){$\text{Entire}$} \put(165,31){$\text{vanishing}$}

\end{picture}
\end{center}

Apart from extremal cohomology groups, we give a certain families of unstable cohomology groups, which are not eventually vanish.
\begin{theorem}\label{main2}
For each $m\geq4$ and $a\in\{2,4,\ldots,2\lceil\frac{m}{2}\rceil-2\},$ we have non-vanishing
$$\displaystyle{\lim_{k \to \infty}}\emph{dim}(H^{a(k-3)+4m-1}(\mathscr{C}_{k}(\emph{CP}^{m});\mathbb{Q}))\neq0.$$
\end{theorem}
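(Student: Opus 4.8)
The plan is to work in the reduced Chevalley--Eilenberg complex $\mathbf{E}=\mathbf{E}(\text{CP}^{m})=\bigoplus_{k}\mathbf{E}_{k}$ used in \cite{Y}, which computes $\bigoplus_{k}H^{*}(\mathscr{C}_{k}(\text{CP}^{m});\mathbb{Q})$ bigraded by cohomological degree and by weight $k$. It is the free graded--commutative algebra on weight--$1$ generators $v_{0},\dots ,v_{m}$ in even degrees $0,2,\dots ,2m$ (a copy of $H^{*}(\text{CP}^{m})$) and weight--$2$ generators $w_{0},\dots ,w_{m}$ in odd degrees $2m-1,2m+1,\dots ,4m-1$, with $dv_{i}=0$ and $dw_{p}=\sum_{i+j=m+p}\lambda^{p}_{ij}\,v_{i}v_{j}$, where the $\lambda^{p}_{ij}$ are read off the intersection form of $\text{CP}^{m}$. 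Writing $a=2\ell$, the hypothesis $2\le a\le 2\lceil m/2\rceil-2$ is exactly $1\le\ell<m/2$: it guarantees both that $v_{\ell}$ (of degree $a$) exists and that $2\ell<m$. The target bidegree $\bigl(a(k-3)+4m-1,\,k\bigr)$ splits as $\deg v_{\ell}^{\,k-3}=a(k-3)$ in weight $k-3$ plus degree $4m-1$ in weight $3$, and the idea is to realize the desired classes as $v_{\ell}^{\,k-3}$ times one fixed ``head'' cocycle.

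I would first produce the head, in bidegree $(4m-1,3)$. Parity forces this graded piece of $\mathbf{E}$ to be spanned by the monomials $v_{i}w_{m-i}$, $0\le i\le m$, and no element of bidegree $(4m-2,3)$ can map into it under $d$, so
$$H^{4m-1}(\mathscr{C}_{3}(\text{CP}^{m});\mathbb{Q})=\ker\Bigl(d:\ \textstyle\bigoplus_{i=0}^{m}\mathbb{Q}\,v_{i}w_{m-i}\ \longrightarrow\ \operatorname{Sym}^{3}(v_{0},\dots ,v_{m})\Bigr),$$
the target being the span of the index--$2m$ cubic monomials. I would check this kernel is nonzero: $d(v_{i}w_{m-i})=v_{i}\,dw_{m-i}$ is a combination of those cubics, and the ``diagonal'' shape of the $\lambda^{p}_{ij}$ (off--diagonal monomials occurring doubled, squares once) produces the linear relations among the $d(v_{i}w_{m-i})$ that force $\dim\ker\ge 1$; this is a finite, $k$--independent verification. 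Fix a nonzero cocycle $\zeta_{m}=\sum_{i}\mu_{i}v_{i}w_{m-i}$; since no kernel element is a multiple of $v_{\ell}w_{m-\ell}$ alone, its $v_{\ell}$--free part $\bar\zeta_{m}$ is nonzero.

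Next I would propagate: $z_{a,k}:=v_{\ell}^{\,k-3}\zeta_{m}$ is a cocycle of the desired bidegree, and the heart of the matter is that it is not a coboundary for all large $k$. I would filter $\mathbf{E}_{k}$ by the $v_{\ell}$--exponent; since $dv_{\ell}=0$ the differential is filtration--preserving, and the constraint $2\ell<m$ forces $v_{\ell}^{2}$ never to appear in any $dw_{p}$ (which would need $2\ell=m+p\ge m$), so $d$ raises the $v_{\ell}$--exponent by at most one. The associated spectral sequence has $E_{1}=\mathbb{Q}[v_{\ell}]\otimes H^{*}(\overline{\mathbf{E}},d_{0})$, with $\overline{\mathbf{E}}$ generated by $\{v_{i}:i\neq\ell\}$ and the $w_{p}$ and $d_{0}$ the filtration--preserving part of $d$; the class $z_{a,k}$ is represented there by $v_{\ell}^{\,k-3}\otimes[\bar\zeta_{m}]$. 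Since $v_{\ell}$ is a permanent cycle, it suffices to show $[\bar\zeta_{m}]$ survives and that its product with $v_{\ell}^{\,k-3}$ does not vanish on $E_{\infty}$ — and by the filtration--jump bound the differentials out of the filtration degree of $z_{a,k}$ are so restricted that this reduces to a bounded, $k$--independent check (choosing $\zeta_{m}$ suitably within the kernel above, and if necessary restricting $k$ to a fixed residue). Once $z_{a,k}$ is nonzero in $H^{a(k-3)+4m-1}(\mathscr{C}_{k}(\text{CP}^{m});\mathbb{Q})$ for all $k$ beyond an explicit bound, we get $\liminf_{k}\dim H^{a(k-3)+4m-1}(\mathscr{C}_{k}(\text{CP}^{m});\mathbb{Q})\ge 1$, which is the claim.

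The main obstacle is this last non--coboundary step: the relevant bidegrees grow with $k$ and so do the spaces of potential coboundaries, and one must pin down exactly why the $v_{\ell}$--filtration spectral sequence leaves the leading class $v_{\ell}^{\,k-3}\otimes[\bar\zeta_{m}]$ untouched. This is where the numerical constraint is used in full — $a$ even so that $v_{\ell}$ exists, and $a<m$ so that $v_{\ell}^{2}$ never enters any $dw_{p}$ — while the shift $k\mapsto k-3$ and the intercept $4m-1$ are forced by the weight/degree bookkeeping of a head built from one $w_{p}$ and one $v_{i}$. What remains is the explicit, sign--sensitive verification of the two rank facts: $\dim\ker\ge 1$ for the head, and the survival of the propagated class on $E_{\infty}$.
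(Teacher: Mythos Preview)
Your overall strategy --- build a head cocycle in bidegree $(4m-1,3)$ and multiply by $v_{a}^{\,k-3}$, then exploit that for $a<m$ no $v_{a}^{2}$ appears in any $\partial w$ --- is exactly the paper's. The difference is in execution, and the paper's is substantially simpler on both counts.

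For the head, the paper does not argue abstractly that $\dim\ker\ge 1$; it writes down
\[
\alpha=\sum_{j=1}^{m}(2m-3j)\,v_{2j}\,w_{4m-(2j+1)}
\]
and verifies $\partial\alpha=0$ by direct expansion. Having explicit coefficients is not cosmetic: the coefficient of the $v_{a}$-term is $2m-\tfrac{3a}{2}$, which is nonzero precisely because $a<m$, and this is what drives the next step.

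For the non-coboundary step you filter by $v_{a}$-exponent, pass to a spectral sequence, and propose to track the \emph{lowest}-exponent piece $v_{\ell}^{\,k-3}\otimes[\bar\zeta_{m}]$ through the pages; you rightly flag this as the main obstacle and leave it open. The paper instead looks at the \emph{top} of the same filtration. The class $\beta_{a,k}=v_{a}^{\,k-3}\alpha$ contains the monomial $(2m-\tfrac{3a}{2})\,v_{a}^{\,k-2}w_{4m-a-1}$ with $v_{a}$-exponent $k-2$. Any potential preimage lies in $\omega=2$ and has the shape $v_{i_{1}}\cdots v_{i_{k-4}}w_{j}w_{l}$; its $v_{a}$-exponent is at most $k-4$, and since $v_{a}^{2}$ never occurs in any $\partial w$ the differential raises that exponent by at most one. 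Hence every coboundary has $v_{a}$-exponent $\le k-3$, and $\beta_{a,k}$ cannot be exact. That is the entire argument --- no spectral sequence, no survival analysis, and nothing $k$-dependent left to check.

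So your own key observation (``$d$ raises the $v_{\ell}$-exponent by at most one'') already finishes the proof, provided you look at the top-exponent monomial rather than the $v_{\ell}$-free part. Note that this shortcut genuinely requires the explicit $\alpha$: it uses that the $v_{a}$-coefficient $2m-\tfrac{3a}{2}$ is nonzero, which an arbitrarily chosen kernel element need not satisfy, and without it you are forced back onto the longer route you outlined.
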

It seems that the cohomology groups of higher slopes are eventually vanish. We formulate the following conjecture.
\begin{conjecture}
For each $m\geq4$ and $a\in\{2\lceil\frac{m}{2}\rceil,2\lceil\frac{m}{2}\rceil+2,\ldots,2m-4\},$ we have vanishing
$$\displaystyle{\lim_{k \to \infty}}\emph{dim}(H^{a(k-3)+4m-1}(\mathscr{C}_{k}(\emph{CP}^{m});\mathbb{Q}))=0.$$
\end{conjecture}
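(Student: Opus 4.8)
The plan is to work inside the reduced Chevalley–Eilenberg complex, the tool already used for Theorems \ref{maino}, \ref{main1} and \ref{main2}. Recall that $\bigoplus_{k\ge 0}H^{\ast}(\mathscr{C}_{k}(\mathrm{CP}^{m});\mathbb{Q})$ is computed by the bigraded complex $\mathscr{E}=\mathbb{Q}[z_{0},\dots,z_{m}]\otimes\Lambda[y_{0},\dots,y_{m}]$, in which $z_{i}$ has weight $1$ and degree $2i$, $y_{l}$ has weight $2$ and degree $2m-1+2l$; the differential $d$ vanishes on the generators, and, being a Chevalley–Eilenberg differential, it acts on a monomial $\mu$ by summing, over the unordered pairs of $z$-factors $z_{i},z_{i'}$ of $\mu$ with $0\le i+i'-m\le m$, the monomial obtained from $\mu$ by deleting $z_{i}z_{i'}$ and inserting $\pm\,y_{i+i'-m}$. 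Thus $d$ turns a monomial with $p$ factors $z$ and $q$ factors $y$ into a sum of monomials with $p-2$ factors $z$ and $q+1$ factors $y$, preserves weight, lowers degree by $1$, and its weight-$k$ summand computes $H^{\ast}(\mathscr{C}_{k}(\mathrm{CP}^{m});\mathbb{Q})$. Since $a$ is even, put $j_{0}:=a/2$; the hypothesis $a\ge 2\lceil m/2\rceil$ is exactly $j_{0}\ge\lceil m/2\rceil$, i.e.\ $2j_{0}\ge m$, and the degrees in the conjecture lie on the line $\mathscr{L}_{a}=\{(k,\,a(k-3)+4m-1)\}_{k\ge 1}$ of slope $a$.

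The first step is to localize $\mathscr{E}$ near $\mathscr{L}_{a}$. The value $a(k-3)+4m-1$ is odd, so every monomial lying on $\mathscr{L}_{a}$ contains an odd --- in particular at least one --- number of $y$-factors, and at most $m+1$ of them since the $y$'s are exterior; so it has the form $z^{\mathbf b}y^{S}$ with $1\le|S|\le m+1$ and $\sum_{i}b_{i}=k-2|S|$. Matching degree against weight gives $\sum_{i}(2i-a)b_{i}=O(1)$, a quantity bounded by a constant $C(m)$; equivalently $\sum_{i}i\,b_{i}=j_{0}\sum_{i}b_{i}+O(1)$, so the $z$-indices of $\mathbf b$ have average $j_{0}$. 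Using $j_{0}\ge\lceil m/2\rceil$ one then shows that, for $k>C(m)$, every such monomial carries some index $i^{\ast}\ge\lceil m/2\rceil$ with multiplicity $b_{i^{\ast}}\ge 2$ (in fact $b_{i^{\ast}}\to\infty$): the indices $\ge\lceil m/2\rceil$ must collectively carry a positive fraction of the weight --- otherwise the average would fall below $\lceil m/2\rceil$ --- and there are only $\lfloor m/2\rfloor+1$ of them. Since $2i^{\ast}\ge m$, the square $z_{i^{\ast}}^{2}$ maps under $d$ to $\pm\,y_{2i^{\ast}-m}$: an ``active'' component of the differential is present on every monomial of $\mathscr{L}_{a}$ of large weight.

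From here the route is a Koszul-type contraction built from these active squares, which should trivialize the slope-$a$ homology for $k\gg 0$. The prototype is $\mathbb{Q}[z_{i^{\ast}}]\otimes\Lambda[y_{2i^{\ast}-m}]$ with $d(z_{i^{\ast}}^{n})=\pm\binom{n}{2}z_{i^{\ast}}^{n-2}y_{2i^{\ast}-m}$: over $\mathbb{Q}$ its homology is $\langle 1,\,z_{i^{\ast}}\rangle$, concentrated in weights $\le 1$. Choosing, by a fixed rule (say the largest admissible index), one active square on each monomial and gluing the partial homotopies ``divide by $z_{i^{\ast}}^{2}$'' should peel off all of the high-weight part, leaving a complex of weight $\le C(m)$; that would give $H^{\ast}(\mathscr{C}_{k}(\mathrm{CP}^{m});\mathbb{Q})=0$ along $\mathscr{L}_{a}$ for $k>C(m)$, which is the assertion. (Equivalently, in the language of the paper, the slope-$a$ slice of the Hilbert function of $\bigoplus_{k}H^{\ast}(\mathscr{C}_{k}(\mathrm{CP}^{m});\mathbb{Q})$ becomes identically zero rather than a nonconstant quasi-polynomial.) The contrast with Theorem \ref{main2} is transparent: when $j_{0}\le\lceil m/2\rceil-1$ there are high-weight monomials on $\mathscr{L}_{a}$ all of whose $z$-indices are $<m/2$, so that no square --- indeed no pair --- is active; these carry the persistent classes, namely powers of $z_{j_{0}}$ times suitable weight-$3$, degree-$(4m-1)$ cycles.

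The main obstacle is exactly the contraction of the third step. The active squares on different monomials use different variables $z_{i^{\ast}}$, so they must be chosen coherently and the resulting partial homotopies shown to glue into a homotopy with controlled error; and once the chosen $z_{i^{\ast}}$-powers have been killed, the surviving monomials are supported on the remaining indices --- whose average has dropped --- so the descent must be iterated, and its termination at a weight bounded by $C(m)$ uniformly in $m$ and in $a\in\{2\lceil m/2\rceil,\dots,2m-4\}$ must be established. This bookkeeping is the analogue, one slope below the extremal one, of the fine cancellations separating entire extremal vanishing from limiting extremal vanishing in \cite{Y}, and supplying it is precisely what a complete proof requires; this is why the statement is recorded here only as a conjecture.
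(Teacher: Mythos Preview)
The statement you are attempting is recorded in the paper as a \emph{conjecture}; the paper offers no proof. Consequently there is no argument to compare your proposal against, and your closing sentence (``this is why the statement is recorded here only as a conjecture'') is exactly right: what you have written is a heuristic strategy with an explicitly identified gap, not a proof.

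Two comments on the content of the sketch itself. First, your differential runs in the direction opposite to the paper's: in the paper $\partial$ sends $w_{2i-1}$ to $\sum v_{2a}v_{2b}$, raising total degree by one and lowering the weight $\omega$, whereas your $d$ replaces a pair $z_{i}z_{i'}$ by $y_{i+i'-m}$, lowering degree. Over $\mathbb{Q}$ in finite dimensions the two complexes are dual and have isomorphic (co)homology, so this is not a mathematical error, but you should be aware that your ``active square'' picture is the mirror image of the paper's setup; in the paper's conventions the relevant move is that $w_{4i^{\ast}-1}$ has $v_{2i^{\ast}}^{2}$ (among other terms) in its image, not the other way around.

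Second, the genuine obstacle is precisely the one you flag. The averaging argument in your second paragraph is sound and does guarantee, for large $k$, the presence of some index $i^{\ast}\ge\lceil m/2\rceil$ with $b_{i^{\ast}}\ge 2$. But the choice of $i^{\ast}$ varies from monomial to monomial, and the differential (in either direction) couples different indices: applying $\partial$ to $w_{4i^{\ast}-1}$ produces not only $v_{2i^{\ast}}^{2}$ but all $v_{2a}v_{2b}$ with $a+b=2i^{\ast}$, so the ``partial homotopies'' interact nontrivially. Turning the local Koszul acyclicity into a global contracting homotopy along the slope-$a$ line would require either a filtration whose associated graded decouples the indices, or a careful inductive descent with explicit error control; neither is supplied. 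Until that step is carried out the argument remains a plausible outline, consistent with the paper's decision to leave the statement as a conjecture.
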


\subsection{Outline of the paper and general conventions}
In section 2, we give a quick tour of Chevalley–Eilenberg complex. In section 3, we explicitly discuss the general properties of reduced Chevalley–Eilenberg complex defined by author. The proof of Theorem \ref{main1} is contain in section 4. In section 5, we give the proof of Theorem \ref{main2}. In the last section, we give the final remark on the optimal range of the limiting extremal vanishing.\\\\
$\bullet$ We work throughout with finite dimensional graded vector spaces. The degree of an element $v$ is written $deg(v)$.\\\\
$\bullet$ The symmetric algebra $Sym(\mathscr{V}^{*})$ is the tensor product of a polynomial algebra and an exterior algebra:
$$ Sym(\mathscr{V}^{*})=\bigoplus_{k\geq0}Sym^{k}(\mathscr{V}^{*})=Poly(\mathscr{V}^{even})\bigotimes Ext(\mathscr{V}^{odd}), $$
where $Sym^{k}$ is generated by the monomials of length $k.$\\\\
$\bullet$ Throughout the paper, we will consider the homology and cohomology over $\mathbb{Q}$.\\\\
$\bullet$ The $n$-th suspension of the graded vector space $\mathscr{V}$ is the graded vector space $\mathscr{V}[n]$ with
$\mathscr{V}[n]_{i} = \mathscr{V}_{i-n},$ and the element of $\mathscr{V}[n]$ corresponding to $a\in \mathscr{V}$ is denoted $s^{n}a;$ for example
$$ H_{*}(S^{2};\mathbb{Q})[n] =\begin{cases}
      \mathbb{Q}, & \text{if $*\in\{n,n+2 \}$} \\
      0, & \mbox{otherwise}.\\
   \end{cases} $$ \\\\
$\bullet$ We write $H_{-*}(\mathscr{M};\mathbb{Q})$ for the graded vector space whose degree $-i$ part is
the $i$-th homology group of $M;$ for example
$$ H_{-*}(\text{CP}^m;\mathbb{Q}) =\begin{cases}
      \mathbb{Q}, & \text{if $*\in\{-2m,-2m+2,\ldots,0. \}$} \\
      0, & \mbox{otherwise}.\\
   \end{cases} $$

\section{Chevalley–Eilenberg complex}
Fulton--Macpherson \cite{F-M} described a model $F(k)$ for the cohomology of $\mathscr{F}_{k}(\mathscr{X})$ of a smooth projective variety $\mathscr{X}$, where $F(k)$ depends on the cohomology ring of $\mathscr{X}$, the canonical orientation class and the Chern classes of $\mathscr{X}$. A simplified version of the Fulton--MacPherson model is obtained by Kriz \cite{K} (see also \cite{BMP}). The kriz's model does not depend on Chern classes. The natural action of the symmetric group on the configuration spaces $\mathscr{F}_{k}(\mathscr{X})$ induces an action on the Kriz model. The cohomology of $\mathscr{C}_{k}(\mathscr{X})$ is obtained by the $\mathscr{S}_{k}$--invariant part of Fulton--MacPherson and Kri\v{z} models (see corollary 8c of \cite{F-M} and remark 1.3 of \cite{K}):$$H^{i}(\mathscr{C}_{k}(\mathscr{X});\mathbb{Q})\approx H^{i}(\mathscr{F}_{k}(\mathscr{X});\mathbb{Q})^{\mathscr{S}_{k}}.$$ F\'{e}lix--Thomas \cite{F-Th} (see also \cite{F-Ta}) constructed a Sullivan model for the rational cohomology of configuration spaces of closed oriented even dimensional manifolds. The identification was established in full generality by the Knudsen in \cite{Kn} using the theory of factorization homology \cite{AF}. We will restrict our attention to the case of closed even dimensional manifolds.\\

Let $\mathscr{M}$ be a connected closed oriented manifold of dimension $2m.$ The diagonal comultiplication $\Delta\,:\,H_{*}(\mathscr{M})\rightarrow H_{*}(\mathscr{M})\otimes H_{*}(\mathscr{M})$
is defined on a dual basis $x_{l}^{*}\in H_{*}(\mathscr{M})$ as
$$\Delta(x_{l}^{*})=\sum_{i,j}(\text{coefficient of $x_{l}$ in $x_{i}\cup x_{j}$})x_{i}^{*}\otimes x_{j}^{*},\quad
\text{where $x_{i},\,x_{j}\in H^{*}(\mathscr{M})$}.$$ 

We consider the two shifted copies of vector spaces $$\mathscr{V}^{*}=H_{-*}(\mathscr{M};\mathbb{Q})[2m],\quad\mathscr{W}^{*}=H_{-*}(\mathscr{M};\mathbb{Q})[4m-1]$$
$$ \mathscr{V}^{*}=\bigoplus_{i=0}^{2m}\mathscr{V}^{i},\quad\mathscr{W}^{*}=\bigoplus_{j=2m-1}^{4m-1}\mathscr{W}^{j},$$
and a differential $\partial $ (induced by $\Delta$):
$$\partial|_{\mathscr{V}^{*}}=0,\quad \partial|_{\mathscr{W}^{*}}:\,\mathscr{W}^{*} \longrightarrow Sym^{2}(\mathscr{V}^{*}).$$ We choose bases in $\mathscr{V}^{i}$ and $\mathscr{W}^{j}$ as $$ \mathscr{V}^{i}=\mathbb{Q}\langle v_{i,1},v_{i,2},\ldots\rangle,\quad \mathscr{W}^{j}=\mathbb{Q}\langle w_{j,1},w_{j,2},\ldots\rangle $$
(the degree of an element is marked by the first lower index). Now we consider the graded algebra:

$$ \Omega^{*,*}_{k}(\mathscr{M})=\bigoplus_{i\geq 0}\bigoplus_{\omega=0}^{\left\lfloor\frac{k}{2}\right\rfloor}
\Omega^{i,\omega}_{k}(\mathscr{M})=\bigoplus_{\omega=0}^{\left\lfloor\frac{k}{2}\right\rfloor}\,(Sym^{k-2\omega}(\mathscr{V}^{*})\otimes Sym^{\omega}(\mathscr{W}^{*})) $$
where $i$ and $\omega$ are the total degree and weight grading respectively. 
Now, we get identification $$H^{*}(\mathscr{C}_{k}(\mathscr{M}))\simeq H^{*}(\Omega^{*,*}_{k}(\mathscr{M}),\partial).$$
By definition of differential, we have 
$$\partial:\Omega^{*,*}_{k}(\mathscr{M})\longrightarrow\Omega^{*+1,*-1}_{k}(\mathscr{M}).$$
The relation between configuration spaces of $\mathbb{R}^{m}$ and Lie algebra homology is also studied by Cohen \cite{Co}.

\section{Reduced Chevalley–Eilenberg complex}
In this section, we study the general properties of a reduced complex correspond to $\text{CP}^{m}.$

First, we construct the differential graded algebra $\Omega_{k}^{*,*}(\text{CP}^{m}).$  The cohomology ring of $\text{CP}^{m}$ is:
$$H^{*}(\text{CP}^{m};\mathbb{Q})=\dfrac{\mathbb{Q}[\zeta]}{\langle \zeta^{m+1}\rangle},\quad\text{where  } deg(\zeta)=2.$$
The corresponding two graded vector spaces are
$$\mathscr{V}^{*}=\langle v_{0}, v_{2},\ldots,v_{2m}\rangle,\quad \mathscr{W}^{*}=\langle w_{2m-1}, w_{2m+1},\ldots,w_{4m-1}\rangle.$$
We can write explicit $\partial$ (differential):
$$\partial(v_{2i})=0\qquad \quad\qquad\qquad 0\leq i\leq m,$$
$$\partial(w_{2i-1})=\sum_{\substack{a+b=i \\ 0\leq a, b\leq m}}v_{2a}v_{2b}\qquad m\leq i\leq 2m.$$
We have an isomorphism:
$$H^{*}(\mathscr{C}_{k}(\text{CP}^{m}))\simeq H^{*}(\Omega^{*,*}_{k}(\text{CP}^{m}),\partial).$$

\begin{lemma}\label{lemma1}\cite{Y}
For $k\geq 2,$ the sub-complex $\Omega_{k-2}^{*,*}(\emph{CP}^{m}).(v_{2m}^{2}, w_{4m-1})$ of $\Omega_{k}^{*,*}(\emph{CP}^{m})$ is a-cyclic. 
\end{lemma}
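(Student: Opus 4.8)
The plan is to exhibit an explicit contracting homotopy on the sub-complex $\Omega_{k-2}^{*,*}(\text{CP}^{m}).(v_{2m}^{2}, w_{4m-1})$. First I would make precise what this sub-complex is: it consists of elements of $\Omega_{k}^{*,*}(\text{CP}^{m})$ that are divisible (in the symmetric algebra) by $v_{2m}^{2}$ together with those divisible by $w_{4m-1}$, i.e. the ideal generated by the two elements $v_{2m}^{2}$ and $w_{4m-1}$, after checking that this graded subspace is closed under $\partial$ (it is, since $\partial v_{2m}^{2}=0$ and $\partial w_{4m-1} = \sum_{a+b=2m} v_{2a}v_{2b}$, which is divisible by $v_{2m}^2$ as the only term with $a,b\le m$ summing to $2m$ is $a=b=m$; so $\partial w_{4m-1} = v_{2m}^{2}$). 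This last identity is the crucial structural fact: $w_{4m-1}$ maps precisely to $v_{2m}^{2}$ under $\partial$. Hence on the two-dimensional graded piece $\mathbb{Q}\langle v_{2m}^{2}, w_{4m-1}\rangle$ the differential is an isomorphism onto $\mathbb{Q}\langle v_{2m}^{2}\rangle$ in the relevant bidegree, and this is exactly an acyclic ``shift'' pair.

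Next I would set up the homotopy. Write a general element of the sub-complex as $x\cdot v_{2m}^{2} + y\cdot w_{4m-1}$ where $x, y$ range over $\Omega_{k-2}^{*,*}(\text{CP}^{m})$ (being careful that $w_{4m-1}$ is odd, so $w_{4m-1}^{2}=0$ and the decomposition is of the stated form with $y$ not itself divisible by $w_{4m-1}$). Define $h(x\cdot v_{2m}^{2} + y\cdot w_{4m-1}) = \pm\, x\cdot w_{4m-1}$, i.e. $h$ replaces a factor $v_{2m}^{2}$ by $w_{4m-1}$ and kills terms already carrying $w_{4m-1}$; the sign is chosen according to the Koszul rule so that $h$ has degree $-1$ in the total grading and $+1$ in weight. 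Then I would verify the identity $\partial h + h \partial = \mathrm{id}$ on this sub-complex. Using $\partial(x\cdot v_{2m}^{2}) = (\partial x)\cdot v_{2m}^{2}$ and $\partial(y\cdot w_{4m-1}) = (\partial y)\cdot w_{4m-1} \pm y\cdot v_{2m}^{2}$, a direct expansion of $\partial h$ and $h\partial$ on $x\cdot v_{2m}^{2} + y\cdot w_{4m-1}$ gives, after sign bookkeeping, the full identity $x\cdot v_{2m}^{2} + y\cdot w_{4m-1}$, which establishes acyclicity. One should also separately note the degenerate bidegrees at the ``bottom'' of the sub-complex (where $x$ or $y$ must be $1$, using that $\Omega_{k-2}^{*,*}$ has $k\ge 2$ so the index $k-2\ge0$ makes sense); the homotopy formula handles these uniformly.

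The main obstacle I expect is not conceptual but bookkeeping: getting the Koszul signs right when $w_{4m-1}$ is odd-degree and commuting it past the even generators $v_{2i}$, and checking that the map $y \mapsto y\cdot w_{4m-1}$ really does hit every element of the sub-complex not already divisible by $v_{2m}^{2}$ so that $h$ is well-defined on the direct-sum decomposition (this uses that $\{v_{2m}^{2}, w_{4m-1}\}$ is, in the appropriate localized sense, a regular-type pair in the symmetric algebra $Sym(\mathscr{V}^{*})\otimes Sym(\mathscr{W}^{*})$). Once the decomposition $x\cdot v_{2m}^{2} + y\cdot w_{4m-1}$ is seen to be unique, the homotopy identity is a short computation and acyclicity follows. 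I would also remark that this is a special case of the standard fact that if a DGA has a subcomplex of the form (ideal generated by $z$ and $w$ with $\partial w = z$, $\partial z = 0$, and $z$ a non-zero-divisor), then that subcomplex is acyclic — the argument above is just making that folklore statement explicit in the present notation.
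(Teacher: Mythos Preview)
Your approach is essentially identical to the paper's: the paper writes each element uniquely as $v_{2m}^{2}\beta+\gamma w_{4m-1}$ with $\beta,\gamma$ free of $w_{4m-1}$, defines the contracting homotopy $h(v_{2m}^{2}\beta+\gamma w_{4m-1})=w_{4m-1}\beta$, and concludes. Your $h(x\cdot v_{2m}^{2}+y\cdot w_{4m-1})=\pm\,x\cdot w_{4m-1}$ is the same operator, and your identification of the key identity $\partial w_{4m-1}=v_{2m}^{2}$ is exactly what drives the computation; the only minor point is that for the decomposition to be unique you should require \emph{both} $x$ and $y$ to be free of $w_{4m-1}$, not just $y$.
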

\begin{proof}
An element in $\Omega_{k-2}^{*,*}(\text{CP}^{m}).(v_{2m}^{2}, w_{4m-1})$ has a unique expansion $v_{2m}^{2}\beta+\gamma w_{4m-1},$ where $\beta$ and $\gamma$ have no monomial containing $w_{4m-1}.$ The operator $$h(v_{2m}^{2}\beta+\gamma w_{4m-1})=w_{4m-1}\beta$$ gives a homotopy $id\simeq 0.$
\end{proof}
We denote the reduced complex $(\Omega_{k}^{*,*}(\text{CP}^{m})/\Omega_{k-2}^{*,*}(\text{CP}^{m}).(v_{2m}^{2}, w_{4m-1}),\partial_{\text{induced}})$ by 
$$({}^{r}\Omega_{k}^{*,*}(\text{CP}^{m}),\partial).$$
\begin{corollary}\label{corollary}
For $k\geq 2,$ we have an isomorphism $H^{*}({}^{r}\Omega_{k}^{*,*}(\emph{CP}^{m}),\partial)\cong H^{*}(\mathscr{C}_{k}(\emph{CP}^{m})).$
\end{corollary}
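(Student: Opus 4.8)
The plan is to obtain the corollary as a purely formal consequence of Lemma~\ref{lemma1} together with the identification $H^{*}(\mathscr{C}_{k}(\text{CP}^{m}))\simeq H^{*}(\Omega^{*,*}_{k}(\text{CP}^{m}),\partial)$ recorded at the end of Section~3. First I would confirm that $\Omega_{k-2}^{*,*}(\text{CP}^{m}).(v_{2m}^{2},w_{4m-1})$ really is a $\partial$-stable subspace of $\Omega_{k}^{*,*}(\text{CP}^{m})$, so that the quotient complex $({}^{r}\Omega_{k}^{*,*}(\text{CP}^{m}),\partial)$ is well defined. This is immediate from the explicit formulas for $\partial$: since $\partial(v_{2m})=0$ and the only solution of $a+b=2m$ with $0\le a,b\le m$ is $a=b=m$, one has $\partial(w_{4m-1})=v_{2m}^{2}$; hence for an element $v_{2m}^{2}\beta+\gamma w_{4m-1}$ as in the proof of Lemma~\ref{lemma1} we get
$$\partial(v_{2m}^{2}\beta+\gamma w_{4m-1})=v_{2m}^{2}(\partial\beta\pm\gamma)+(\partial\gamma)\,w_{4m-1},$$
which again lies in the subcomplex, because $\partial$ strictly decreases the number of $\mathscr{W}$-factors and therefore creates no new occurrence of $w_{4m-1}$.

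Next I would write the short exact sequence of cochain complexes
$$0\longrightarrow \Omega_{k-2}^{*,*}(\text{CP}^{m}).(v_{2m}^{2},w_{4m-1})\longrightarrow \Omega_{k}^{*,*}(\text{CP}^{m})\longrightarrow {}^{r}\Omega_{k}^{*,*}(\text{CP}^{m})\longrightarrow 0$$
and pass to the associated long exact sequence in cohomology. By Lemma~\ref{lemma1} the left-hand complex is acyclic (this is where $k\ge 2$, hence $k-2\ge 0$, is used); therefore every third term of the long exact sequence vanishes, and the quotient projection induces an isomorphism $H^{*}(\Omega_{k}^{*,*}(\text{CP}^{m}),\partial)\cong H^{*}({}^{r}\Omega_{k}^{*,*}(\text{CP}^{m}),\partial)$. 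Composing this with $H^{*}(\mathscr{C}_{k}(\text{CP}^{m}))\simeq H^{*}(\Omega_{k}^{*,*}(\text{CP}^{m}),\partial)$ yields the desired isomorphism for every $k\ge 2$.

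There is no substantial obstacle here; the only point that merits any care is the verification in the first step that the candidate subcomplex is closed under the induced differential---equivalently, that the homotopy operator $h$ of Lemma~\ref{lemma1} is defined on all of $\Omega_{k-2}^{*,*}(\text{CP}^{m}).(v_{2m}^{2},w_{4m-1})$ and satisfies $\partial h+h\partial=\mathrm{id}$ there---which the explicit formulas for $\partial$ on generators make transparent. Once this is in place the corollary is one line of the long exact sequence, and it is this reduced model ${}^{r}\Omega_{k}^{*,*}(\text{CP}^{m})$ that will serve as the working object in Sections~4 and~5.
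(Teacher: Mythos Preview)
Your proposal is correct and matches the paper's intent: the corollary is stated immediately after Lemma~\ref{lemma1} with no separate proof, precisely because it follows from the acyclicity of the subcomplex via the short exact sequence of complexes and its long exact cohomology sequence, together with the identification $H^{*}(\mathscr{C}_{k}(\text{CP}^{m}))\simeq H^{*}(\Omega_{k}^{*,*}(\text{CP}^{m}),\partial)$. Your extra verification that $\Omega_{k-2}^{*,*}(\text{CP}^{m}).(v_{2m}^{2},w_{4m-1})$ is $\partial$-closed is a useful sanity check that the paper leaves implicit.
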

Now, we explicitly discusses the support of the reduced complex.
\begin{lemma}
For $\omega>\emph{min}\{\lfloor\frac{k}{2}\rfloor,m\},$ we have ${}^{r}\Omega_{k}^{\omega,*}(\emph{CP}^{m})=0.$
\end{lemma}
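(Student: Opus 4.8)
The plan is to read the vanishing directly off the monomial description of the reduced complex, using that the $w$-generators are all odd. First I would record that $\deg(w_{2i-1})=2i-1$ is odd for every $i$ with $m\leq i\leq 2m$, so that $Sym^{\omega}(\mathscr{W}^{*})$ is the exterior power $\Lambda^{\omega}(\mathscr{W}^{*})$; consequently every weight-$\omega$ monomial of $\Omega_{k}^{*,*}(\text{CP}^{m})$ is, up to a scalar and a factor in $Sym^{k-2\omega}(\mathscr{V}^{*})$, a product of $\omega$ pairwise distinct generators among $w_{2m-1},w_{2m+1},\ldots,w_{4m-1}$.

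Next I would note the two a priori constraints on the weight in the \emph{unreduced} complex: a nonzero weight-$\omega$ monomial needs $Sym^{k-2\omega}(\mathscr{V}^{*})\neq 0$, hence $k-2\omega\geq 0$, i.e.\ $\omega\leq\lfloor k/2\rfloor$; and it needs $\Lambda^{\omega}(\mathscr{W}^{*})\neq 0$, hence $\omega\leq\dim\mathscr{W}^{*}=m+1$. The improvement of the bound from $m+1$ to $m$ is exactly where passing to the reduced complex enters.

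Then I would invoke the structure of the quotient. By the description in the proof of Lemma \ref{lemma1}, the submodule $\Omega_{k-2}^{*,*}(\text{CP}^{m}).(v_{2m}^{2}, w_{4m-1})$ is spanned by the monomials divisible by $v_{2m}^{2}$ or by $w_{4m-1}$, so ${}^{r}\Omega_{k}^{*,*}(\text{CP}^{m})$ has a basis consisting of the monomials divisible by neither. In particular no reduced basis monomial contains the factor $w_{4m-1}$, so a reduced basis monomial of weight $\omega$ is a product of $\omega$ distinct generators drawn from the $m$-dimensional space $\langle w_{2m-1},w_{2m+1},\ldots,w_{4m-3}\rangle$, which forces $\omega\leq m$. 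Combining with $\omega\leq\lfloor k/2\rfloor$ gives ${}^{r}\Omega_{k}^{\omega,*}(\text{CP}^{m})=0$ for $\omega>\min\{\lfloor k/2\rfloor,m\}$, as claimed.

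I do not expect a genuine obstacle here; the argument is bookkeeping on the monomial basis. The only point deserving a line of care is the claim that ${}^{r}\Omega_{k}^{*,*}(\text{CP}^{m})$ really does have the stated monomial basis — i.e.\ that "divisible by $v_{2m}^{2}$ or by $w_{4m-1}$" cuts out the submodule — but this is immediate from the unique expansion $v_{2m}^{2}\beta+\gamma w_{4m-1}$ with $\beta,\gamma$ free of $w_{4m-1}$ that is already used in the proof of Lemma \ref{lemma1}.
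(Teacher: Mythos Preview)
Your proposal is correct and follows essentially the same line as the paper's own proof: both argue that in the reduced complex the $w$-factors are drawn from the $m$ odd generators $w_{2m-1},\ldots,w_{4m-3}$ (since $w_{4m-1}$ has been killed), forcing $\omega\le m$, while $\omega\le\lfloor k/2\rfloor$ holds by the definition of the weight grading. Your write-up is simply more explicit about why $Sym^{\omega}(\mathscr{W}^{*})=\Lambda^{\omega}(\mathscr{W}^{*})$ and about the monomial basis of the quotient, but the substance is identical.
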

\begin{proof}
The odd degree elements are concentrated in $\mathscr{W}^{*}/\langle w_{4m-1}\rangle.$ The weight grading $\omega$ in ${}^{r}\Omega_{k}^{\omega,*}(\text{CP}^{m})$ is depend on the length of monomials in $Sym(\mathscr{W}^{*}/\langle w_{4m-1}\rangle).$ The monomial of maxiam length in $Sym(\mathscr{W}^{*}/\langle w_{4m-1}\rangle)$ is $w_{2m-1}w_{2m+1}\ldots w_{4m-3}.$ This implies that $\omega\leq m.$ Also, by definition of complex ${}^{r}\Omega_{k}^{*,*}(\text{CP}^{m})$ the wight grading must be less than and equal to $\lfloor\frac{k}{2}\rfloor.$  This complete the proof. 
\end{proof}
Now, we can write the reduced graded algebra as:
$${}^{r}\Omega_{k}^{*,*}(\text{CP}^{m})=\bigoplus_{\omega=0}^{\text{min}\{\lfloor\frac{k}{2}\rfloor,m\}}{}^{r}\Omega_{k}^{\omega,*}(\text{CP}^{m}).$$
Let us consider the set:
$$\Phi_{\omega,k,m}=\{deg(x)\in\mathbb{N}\cup\{0\}\,| \,x\in {}^{r}\Omega_{k}^{\omega,*}(\text{CP}^{m})\}.$$
\begin{lemma}
For $k\geq1,$ $m\geq1$ and $0<\omega\leq \emph{min}\{\lfloor\frac{k}{2}\rfloor,m\}$ we have
$$ \emph{max}\Phi_{\omega,k,m} =\begin{cases}
      4\omega m-(\omega^{2}+\omega), & \text{if $k\geq 2$ even, $\omega=\lfloor\frac{k}{2}\rfloor,$ $\text{min}\{\lfloor\frac{k}{2}\rfloor,m\}=\lfloor\frac{k}{2}\rfloor$} \\
      (2m-2)k-(\omega^{2}-2\omega-2), & \mbox{otherwise}.\\
   \end{cases} $$
\end{lemma}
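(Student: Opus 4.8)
The plan is to compute directly the maximal degree of a monomial in ${}^{r}\Omega_{k}^{\omega,*}(\text{CP}^{m})$ by understanding exactly which monomials survive in the reduced complex. Recall that ${}^{r}\Omega_{k}^{\omega,*}(\text{CP}^{m})$ is spanned by monomials of the form $v_{2a_{1}}\cdots v_{2a_{k-2\omega}}\, w_{2b_{1}-1}\cdots w_{2b_{\omega}-1}$, with the constraint (from Lemma~\ref{lemma1} and the definition of the reduced complex) that no monomial contains the product $v_{2m}^{2}$ together with $w_{4m-1}$, i.e. we quotient by the ideal generated by $v_{2m}^{2}w_{4m-1}$ and $v_{2m}^{4}$ (more precisely the submodule $\Omega_{k-2}^{*,*}.(v_{2m}^{2},w_{4m-1})$). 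First I would record that $\deg(v_{2a})=2a \le 2m$ and $\deg(w_{2b-1}) = 2b-1 \le 4m-1$, so each $v$-factor contributes at most $2m$ and each $w$-factor at most $4m-1$; without any relation the crude maximum would be $2m(k-2\omega) + (4m-1)\omega$. The whole point is that the exterior relations on the odd generators $w$ and the truncation relation coming from the reduced quotient force this bound down.

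The key step is a case analysis on whether all the $w$-generators can simultaneously be the top class $w_{4m-1}$. Since the $w_{2b-1}$ live in odd degree they anticommute, so a monomial in the $w$'s of weight $\omega$ must use $\omega$ \emph{distinct} generators from $\{w_{2m-1}, w_{2m+1},\ldots, w_{4m-1}\}$; there are $m+1$ such generators, which is why $\omega \le m$. To maximize the $w$-contribution we pick the $\omega$ largest, namely $w_{4m-1}, w_{4m-3}, \ldots, w_{4m-2\omega+1}$, giving total $w$-degree $\sum_{j=0}^{\omega-1}(4m-1-2j) = 4m\omega - \omega - \omega(\omega-1) = 4m\omega - \omega^{2}$. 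Now there are two competing strategies for the $v$-part: either (a) take $w_{4m-1}$ as one of the $w$-factors, in which case the reduced relation forbids $v_{2m}^{2}$, so among the $k-2\omega$ even factors at most one can be $v_{2m}$ and the rest are at most $v_{2m-2}$ (degree $2m-2$), or (b) drop $w_{4m-1}$, use the next $\omega$ largest odd generators $w_{4m-3},\ldots,w_{4m-2\omega-1}$ (losing $2$ in total $w$-degree but in fact the relation only involves $w_{4m-1}$), which then allows all $k-2\omega$ even factors to be $v_{2m}$. One then compares the two totals and also separately treats the extreme case $\omega = \lfloor k/2\rfloor$ with $k$ even and $\lfloor k/2\rfloor \le m$, where there are no even factors at all and the maximum is purely $\sum_{j}(4m-1-2j)$ over the $\omega$ largest $w$'s with $w_{4m-1}$ allowed — this is the first branch, $4\omega m - (\omega^{2}+\omega)$. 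I would carefully check that when $\omega < \lfloor k/2 \rfloor$ (so at least one even factor is present), strategy (b) wins and yields $2m(k-2\omega) + (4m\omega - \omega - 2) \overset{?}{=} (2m-2)k - (\omega^{2} - 2\omega - 2)$ — wait, that needs the $w$-degrees to actually be $w_{4m-3}, w_{4m-5},\ldots$ and one double-checks $4m\omega - \omega^{2} - \big(\text{correction}\big)$; the arithmetic should be organized as: total $= 2m(k-2\omega) + \sum_{j=1}^{\omega}(4m-1-2j) = 2mk - 4m\omega + 4m\omega - \omega - \omega(\omega+1) = 2mk - \omega^{2} - 2\omega$, and then I would reconcile this with the claimed $(2m-2)k - (\omega^{2}-2\omega-2)$, which forces a reexamination of exactly which $v$'s are allowed (the correct optimum keeps $w_{4m-1}$, so the $v$'s are capped, giving $2m + (2m-2)(k-2\omega-1) + (4m\omega-\omega^{2}) = (2m-2)k - \omega^{2} + 2\omega + 2$, matching the second branch).

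I expect the main obstacle to be precisely this bookkeeping: correctly identifying, in each regime, whether it is better to sacrifice the top odd class $w_{4m-1}$ (gaining freedom to repeat $v_{2m}$) or to keep it (at the cost of capping all but one $v$-factor at $v_{2m-2}$), and then verifying that the resulting piecewise formula collapses exactly to the two stated cases. One must be attentive to boundary situations: when $k - 2\omega = 0$ (no $v$-factors, first branch); when $k - 2\omega = 1$ (a single $v$-factor, where keeping $w_{4m-1}$ costs nothing since one copy of $v_{2m}$ is still allowed); and the interplay between $\min\{\lfloor k/2\rfloor, m\} = \lfloor k/2\rfloor$ versus $= m$. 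After that, the final step is to confirm such a monomial genuinely survives in the quotient — it is a basis element of $\Omega_{k}^{*,*}$ not lying in the acyclic submodule $\Omega_{k-2}^{*,*}.(v_{2m}^{2},w_{4m-1})$ — and that every monomial of strictly larger degree does lie in that submodule or is zero, which follows from the explicit description of the submodule as the span of monomials divisible by $v_{2m}^{2}$ or by $w_{4m-1}$ in the appropriate way, together with the length-$k$ constraint.
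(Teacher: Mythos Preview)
Your proposal rests on a misreading of the reduced complex. The acyclic submodule $\Omega_{k-2}^{*,*}(\text{CP}^{m})\cdot(v_{2m}^{2}, w_{4m-1})$ is the span of all monomials divisible by $v_{2m}^{2}$ \emph{or} by $w_{4m-1}$; the notation $(v_{2m}^{2}, w_{4m-1})$ denotes the ideal generated by these two elements, not their product. This is visible in the paper's proof that the submodule is acyclic, where a general element is written as $v_{2m}^{2}\beta + \gamma w_{4m-1}$ with $\beta,\gamma$ arbitrary. Hence in ${}^{r}\Omega_{k}^{*,*}(\text{CP}^{m})$ the generator $w_{4m-1}$ is identically zero, and independently $v_{2m}^{2}=0$. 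Your description ``quotient by the ideal generated by $v_{2m}^{2}w_{4m-1}$ and $v_{2m}^{4}$'' is not what is happening.

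This collapses your entire strategy-(a)-versus-(b) dichotomy. There is no option to ``keep $w_{4m-1}$'': any monomial containing it vanishes. Likewise there is no option to ``drop $w_{4m-1}$ and allow all $v$-factors to be $v_{2m}$'': as soon as $k-2\omega\ge 2$ the monomial $v_{2m}^{k-2\omega}$ is divisible by $v_{2m}^{2}$ and hence zero. The unique surviving candidate for the maximum (when $k-2\omega\ge 1$) is $v_{2m-2}^{\,k-2\omega-1}v_{2m}\,w_{4m-3}w_{4m-5}\cdots w_{4m-2\omega-1}$, which is exactly what the paper writes down directly, with no optimisation step. Your final arithmetic is also off by $2\omega$: one checks that $2m + (2m-2)(k-2\omega-1) + (4m\omega-\omega^{2}) = (2m-2)k - \omega^{2} + 4\omega + 2$, not $(2m-2)k - \omega^{2} + 2\omega + 2$; the discrepancy is precisely the illicit extra $2\omega$ coming from including $w_{4m-1}$ in your $w$-sum. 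Replacing that sum by the correct $4m\omega-\omega^{2}-2\omega$ (the top $\omega$ odd generators starting from $w_{4m-3}$) yields the stated formula immediately.
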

\begin{proof}
For $\omega=0,$ the highest degree monomial in reduced complex is $v_{2m-2}^{k-1}v_{2m}.$ The degree of this monomial is $(2m-2)k+2.$ 
Let $\omega=\lfloor\frac{k}{2}\rfloor\geq1$ and $\text{min}\{\lfloor\frac{k}{2}\rfloor,m\}=\lfloor\frac{k}{2}\rfloor.$ In this case, the highest degree monomial in the reduced complex ${}^{r}\Omega_{k}^{\omega,*}(\text{CP}^{m})$ is $$w=w_{4m-(2\omega+1)}w_{4m-(2\omega-1)}\ldots w_{4m-3}.$$ The degree of this monomial is:
\begin{align*}
\text{deg}(w)=&4m-(2\omega+1)+4m-(2\omega-1)+\ldots+4m-5+4m-3\\
=&\underbrace{4m+4m+\ldots+4m}_{\omega-\text{times}}-(3+5+\ldots+(2\omega+1))\\
=&4\omega m-(\omega^{2}+2\omega).
\end{align*}
Let $\omega\geq 1.$ Suppose either $\omega\neq\lfloor\frac{k}{2}\rfloor\geq1$ or $\text{min}\{\lfloor\frac{k}{2}\rfloor,m\}\neq\lfloor\frac{k}{2}\rfloor.$ The highest degree monomial in this case is $$u=v_{2m-2}^{k-2\omega-1}v_{2m}w_{4m-(2m+1)}\ldots w_{4m-3}.$$
The degree of this monomial is:
\begin{align*}
\text{deg}(u)=&(2m-2)(k-2\omega-1)+2m+\{4m-(2m+1)+\ldots+4m-3\}\\
=&(2m-2)k-4\omega m+2+\underbrace{4m+4m+\ldots+4m}_{\omega-\text{times}}-(3+5+\ldots+(2\omega+1))\\
=&(2m-2)k-4\omega m+2+4\omega m-(\omega^{2}+2\omega)\\
=&(2m-2)k-(\omega^{2}-2\omega-2).
\end{align*}
\end{proof}

\begin{lemma}
For $k\geq1,$ $m\geq1$ and $0<\omega\leq \emph{min}\{\lfloor\frac{k}{2}\rfloor,m\}$ we have
$$\emph{min}\Phi_{\omega,k,m} = 2\omega m+\omega(\omega-2).$$
\end{lemma}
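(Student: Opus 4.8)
The plan is to make the weight-$\omega$ part of the reduced complex completely explicit on the level of monomials, bound the degree of each monomial from below, and then write down one monomial attaining the asserted value.

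First I would record the shape of a nonzero monomial in ${}^{r}\Omega_{k}^{\omega,*}(\text{CP}^{m})$. By the construction of $\Omega_{k}^{*,*}(\text{CP}^{m})$ and the passage to the reduced complex, such a monomial is (up to scalar) of the form $v_{2a_{1}}\cdots v_{2a_{k-2\omega}}\cdot w_{2b_{1}-1}\cdots w_{2b_{\omega}-1}$ with $0\le a_{i}\le m$, the $b_{j}$ pairwise distinct (the $w$'s are odd, hence live in an exterior algebra), no factor equal to $w_{4m-1}$, and at most one factor equal to $v_{2m}$ — the last two conditions being exactly the relations quotiented out in Lemma~\ref{lemma1}. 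The hypotheses $\omega\le\lfloor k/2\rfloor$ and $\omega\le m$ ensure $k-2\omega\ge 0$ and that $\omega$ distinct generators can be chosen among $w_{2m-1},w_{2m+1},\dots,w_{4m-3}$.

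Next I would establish the lower bound. The $\mathscr{V}^{*}$-factor of any such monomial has degree $\ge 0$ (all $v_{2a_{i}}$ have nonnegative even degree). The $\mathscr{W}^{*}$-factor is a product of $\omega$ distinct generators from $\{w_{2m-1},w_{2m+1},\dots,w_{4m-3}\}$, whose degrees are $2m-1,2m+1,\dots,4m-3$, so its degree is at least the sum of the $\omega$ smallest of these,
$$\sum_{j=0}^{\omega-1}\bigl(2m-1+2j\bigr)=\omega(2m-1)+\omega(\omega-1)=2\omega m+\omega(\omega-2).$$
Hence every homogeneous element of ${}^{r}\Omega_{k}^{\omega,*}(\text{CP}^{m})$ has degree at least $2\omega m+\omega(\omega-2)$. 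For the reverse inequality I would exhibit the witness $v_{0}^{\,k-2\omega}\,w_{2m-1}w_{2m+1}\cdots w_{2m+2\omega-3}$: since $2m+2\omega-3\le 4m-3$, it contains no factor $w_{4m-1}$, and it contains no factor $v_{2m}$, so it is a nonzero element of weight $\omega$ in the reduced complex, of degree exactly $2\omega m+\omega(\omega-2)$. Combining the two inequalities gives $\min\Phi_{\omega,k,m}=2\omega m+\omega(\omega-2)$.

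I do not expect a genuine obstacle here; the argument is essentially bookkeeping. The only point that needs care is checking that the two quotient relations do not interfere with the minimizer — and they do not, precisely because the optimal witness uses the degree-zero generator $v_{0}$ for the $\mathscr{V}$-part and the $\omega$ lowest-degree generators $w_{2m-1},\dots,w_{2m+2\omega-3}$ for the $\mathscr{W}$-part, none of which is $w_{4m-1}$, and no repetition of $v_{2m}$ occurs.
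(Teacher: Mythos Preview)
Your proof is correct and follows the same approach as the paper: both identify the minimizing monomial $v_{0}^{\,k-2\omega}w_{2m-1}w_{2m+1}\cdots w_{2m+2\omega-3}$ and compute its degree to be $2\omega m+\omega(\omega-2)$. Your version is in fact more complete, since you explicitly justify the lower bound (that any weight-$\omega$ monomial has degree at least this value) and verify that the witness survives in the reduced complex, whereas the paper simply asserts the minimizer and computes its degree.
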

\begin{proof}
For $\omega=0,$ the lowest degree monomial in reduced complex is $v_{0}^{k}.$ Let $\omega>0.$ The lowest degree monomial in this case is $$v=v_{0}^{k-2\omega}w_{2m-1}\ldots w_{2m+2\omega-3}.$$
The degree of this monomial is:
\begin{align*}
\text{deg}(v)=&(2m-1)+\ldots+(2m+2\omega-3)\\
=&\underbrace{2m+2m+\ldots+2m}_{\omega-\text{times}}+\{-1+1+\ldots+(2\omega-3)\}\\
=&2\omega m+\omega^{2}-2\omega.
\end{align*}

\end{proof}

\begin{center}
\begin{picture}(360,180)

\put(10,40){\vector(0,1){130}} 
\put(10,40){\vector(1,0){310}} 
\put(7,173){$\omega$} \put(322,35){$i$}
\multiput(8,38)(20,0){15}{$\bullet$} \multiput(80,58)(20,0){12}{$\bullet$} 
\multiput(128,78)(20,0){9}{$\bullet$} \multiput(160,98)(20,0){7}{$\bullet$}
\multiput(190,118)(20,0){4}{$\bullet$}
\multiput(20,30)(20,0){13}{$\ldots$} \multiput(210,130)(20,0){2}{$\vdots$} 
\put(3,38){$0$} \put(3,58){$1$} \put(3,78){$2$} \put(3,98){$3$} \put(3,118){$4$}
\put(4,130){$\vdots$}
\put(290.5,39){\line(0,1){10}}\put(290.5,49){\vector(1,0){21}}\put(312,45){$(2m-2)k+2$}
\put(302,61){\vector(1,0){9}}\put(312,58){$(2m-2)k+3$}
\put(290.5,80){\vector(1,0){20}}\put(312,77){$(2m-2)k+2$}
\put(282,100.5){\vector(1,0){30}}\put(312,97){$(2m-2)k-1$}
\put(255,120.5){\vector(1,0){57}}\put(312,117){$(2m-2)k-6$}

\put(80,60.5){\vector(-1,0){10}}\put(40,58){$2m-1$}
\put(128,80.5){\vector(-1,0){10}}\put(103,78){$4m$}
\put(160,100.5){\vector(-1,0){10}}\put(118,98){$6m+3$}
\put(190,120.5){\vector(-1,0){10}}\put(148,118){$8m+8$}
\put(120,10){$\text{Figure 2. Reduced complex}$}
\end{picture}
\end{center}

\begin{lemma}
The differential $\partial$ on the right side of the complex ${}^{r}\Omega_{k}^{*,*}(\emph{CP}^{m})$ is injective except the following cases\\
(i) $\omega=0$\\
(ii) $\omega=1$ and $k\geq3.$
\end{lemma}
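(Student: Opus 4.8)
The plan is to read ``the right side of the complex'' as the top-degree part of each weight-$\omega$ summand ${}^{r}\Omega_{k}^{\omega,*}(\text{CP}^{m})$, that is, its component in degree $\max\Phi_{\omega,k,m}$ (the right boundary of Figure~2). First I would identify this component explicitly: combining the formula for $\max\Phi_{\omega,k,m}$ from the preceding lemma with the elementary fact that any deviation from the highest available generators strictly lowers the total degree, one checks that for every $\omega\ge1$ the maximal degree is attained by the unique monomial
$$u=p\cdot w_{4m-3}w_{4m-5}\cdots w_{4m-2\omega-1},\qquad p=\begin{cases}v_{2m-2}^{k-2\omega-1}v_{2m}&\text{if }k>2\omega,\\ 1&\text{if }k=2\omega.\end{cases}$$
Hence the top-degree component is one-dimensional, and injectivity of $\partial$ on it reduces to the single statement $\partial(u)\ne0$ in ${}^{r}\Omega_{k}^{*,*}(\text{CP}^{m})$.

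Next I would settle the two exceptional families. If $\omega=0$, then $\partial$ vanishes identically on all of weight $0$ (it kills $\mathscr{V}^{*}$), and weight $0$ is nonzero, so $\partial$ is not injective; this is case (i). If $\omega=1$ and $k=2$, then $u=w_{4m-3}$ and $\partial(u)=\partial(w_{4m-3})=2v_{2m-2}v_{2m}\ne0$, so $\partial$ is injective. If $\omega=1$ and $k\ge3$, then $u=v_{2m-2}^{k-3}v_{2m}\,w_{4m-3}$ and
$$\partial(u)=\pm\,v_{2m-2}^{k-3}v_{2m}\cdot2v_{2m-2}v_{2m}=\pm\,2\,v_{2m-2}^{k-2}v_{2m}^{2}=0,$$
since $v_{2m}^{2}$ is killed in the reduced complex; this is case (ii).

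For the remaining weights $\omega\ge2$ I would expand, by the Leibniz rule, $\partial(u)=\sum_{j=1}^{\omega}\pm\,p\cdot\partial(w_{4m-2j-1})\cdot\prod_{l\ne j}w_{4m-2l-1}$. The exterior monomials $\prod_{l\ne j}w_{4m-2l-1}$, $1\le j\le\omega$, are pairwise distinct, hence linearly independent over $Sym(\mathscr{V}^{*})$, so $\partial(u)=0$ would force $p\cdot\partial(w_{4m-2j-1})=0$ in $Sym(\mathscr{V}^{*})/(v_{2m}^{2})$ for every $j$. I would then take $j=2$: the defining formula gives $\partial(w_{4m-5})=v_{2m-2}^{2}+2v_{2m-4}v_{2m}$, and multiplication by $p$ (which carries at most one factor $v_{2m}$) annihilates the term $2v_{2m-4}v_{2m}$ but not $v_{2m-2}^{2}$, so $p\cdot\partial(w_{4m-5})$ still contains the monomial $v_{2m-2}^{k-2\omega+1}v_{2m}$ when $k>2\omega$ (respectively $v_{2m-2}^{2}$ when $k=2\omega$), and is in particular nonzero. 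Therefore $\partial(u)\ne0$, and $\partial$ is injective on the right side of weight $\omega$, completing the argument.

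The only point needing genuine care is tracking which monomials of $\partial(u)$ survive the passage to the reduced complex (the quotient by $v_{2m}^{2}$ and $w_{4m-1}$): differentiating the top generator $w_{4m-3}$ yields $2v_{2m-2}v_{2m}$, and since $u$ already carries a factor $v_{2m}$ this contribution dies — which is exactly the mechanism behind exception (ii). When $\omega\ge2$ there is a second top generator $w_{4m-5}$ whose differential contributes the $v_{2m}$-free term $v_{2m-2}^{2}$ that rescues $\partial(u)$; the split in the statement between $\omega=1,\ k\ge3$ and $\omega\ge2$ is precisely the dichotomy between having one versus at least two top exterior generators available.
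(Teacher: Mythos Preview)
Your proof is correct and follows the same strategy as the paper: observe that the top-degree piece in each weight is one-dimensional, identify the generating monomial, and check whether its differential vanishes. The paper's proof merely records the exceptional cases (with an apparent typo, writing ``$\omega=3$'' where ``$\omega=1$'' is meant) and then asserts without argument that ``apart from these cases, the differential is non-trivial''; you actually supply that argument via the Leibniz expansion and the $j=2$ term $\partial(w_{4m-5})=v_{2m-2}^{2}+2v_{2m-4}v_{2m}$, whose $v_{2m}$-free summand survives the reduction modulo $v_{2m}^{2}$. So your version is strictly more complete than the paper's, while taking the same route.
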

\begin{proof}
For each weight $\omega\geq0,$ the right side of reduced complex is generated by single element. For $\omega=0,$ the differential of monomial $v_{2m-2}^{k-1}v_{2m}$ is zero. Also, for $\omega=3$ and $k\geq 3,$ we have $\partial(v_{2m-2}^{k-3}v_{2m}w_{4m-3})=0.$ Apart from these cases, the differential is non-trivial on the right side of the reduced complex.
\end{proof}

\section{Proof of Theorem \ref{main1}}

In this section, we give the proof of Theorem \ref{main1}. The matrix of differential $$\partial:\Omega^{a,b}_{k}(\text{CP}^{m})\longrightarrow\Omega^{a+1,b-1}_{k}(\text{CP}^{m})$$ is denoted by $M_{a,b}.$
\begin{lemma}\label{lemma1}
For each $m\geq4$ and $k>8,$ we have exact sequence (in the square brackets are given the dimensions)
$$0\longrightarrow\underset{[2]}{{}^{r}\Omega_{k}^{(2m-2)k-3,3}(\emph{CP}^{m})}\longrightarrow\underset{[6]}{{}^{r}\Omega_{k}^{(2m-2)k-2,2}(\emph{CP}^{m})}\longrightarrow \underset{[6]}{{}^{r}\Omega_{k}^{(2m-2)k-1,1}(\emph{CP}^{m})}\longrightarrow$$ $$\longrightarrow \underset{[2]}{{}^{r}\Omega_{k}^{(2m-2)k,0}(\emph{CP}^{m})}\longrightarrow 0.$$
\end{lemma}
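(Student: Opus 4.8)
The plan is to identify explicitly, for each weight $\omega\in\{0,1,2,3\}$, a basis for the top-degree piece ${}^{r}\Omega_{k}^{(2m-2)k-(3-\omega),\omega}(\text{CP}^m)$ of the reduced complex, then compute the differential on each basis element directly. From the formula $\max\Phi_{\omega,k,m}=(2m-2)k-(\omega^2-2\omega-2)$ (valid in the "otherwise" case, which holds here since $k>8$ forces $\lfloor k/2\rfloor>4>\omega$), the degree $(2m-2)k-(\omega^2-2\omega-2)$ equals $(2m-2)k-(3-\omega)$ precisely when $\omega^2-2\omega-2=3-\omega$, i.e. $\omega^2-\omega-5=0$ — wait, that is not an integer root, so instead one checks case by case that these four degrees $(2m-2)k, (2m-2)k-1,(2m-2)k-2,(2m-2)k-3$ are exactly the successive maxima attained in weights $0,1,2,3$ respectively (this is visible in Figure 2). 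First I would write down the monomial bases: in weight $0$ the top degree $(2m-2)k+2$ is spanned by $v_{2m-2}^{k-1}v_{2m}$ and $v_{2m-2}^{k-2}v_{2m}^2$-type corrections, but at degree $(2m-2)k$ we instead get the two monomials of the form $v_{2m-4}v_{2m-2}^{k-2}v_{2m}$ and $v_{2m-2}^{k}$ (check parities and that no $v_{2m}^2$ appears since that was killed in the reduction); in weights $1,2,3$ the relevant pieces are spanned by the six, six, two monomials obtained by taking the top-degree $v$-monomial of the appropriate length times a product of the lowest-index $w$'s, together with the few admissible variants. The dimensions $[2],[6],[6],[2]$ in the statement tell me exactly how many basis monomials to expect in each slot.

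The core of the argument is then to assemble the four $\partial$-matrices $M_{(2m-2)k-3,3}$, $M_{(2m-2)k-2,2}$, $M_{(2m-2)k-1,1}$ and verify: (a) the first is injective (rank $2$), (b) the last is surjective (rank $2$), and (c) at the two middle spots the image of the incoming map equals the kernel of the outgoing map (each of rank $4$, since $6=2+4$ and $6=4+2$). Concretely, using $\partial(w_{2i-1})=\sum_{a+b=i}v_{2a}v_{2b}$ and the Leibniz rule, each $\partial$ of a basis monomial is a $\mathbb{Q}$-linear combination of the monomials in the next slot; I would record these as explicit small integer matrices (entries are $\pm 1$ or $\pm 2$ coming from the symmetrization, e.g. the coefficient of $v_{2m}v_{2m-2}$ versus $v_{2m-1}^2$-type terms) and compute ranks by hand. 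Exactness in the middle is equivalent to the composite being zero (automatic, since $\partial^2=0$) together with the rank count $2+4=6$ and $4+2=6$, so it suffices to show the three maps have ranks $2$, $4$, $2$ from the left, or equivalently $2,4,4,2$ reading all four maps — actually only three maps appear, and the Euler-characteristic constraint $2-6+6-2=0$ is consistent with exactness, so I only need to pin down two of the three ranks and invoke $\partial^2=0$ for the rest.

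The main obstacle I anticipate is bookkeeping the monomial bases correctly — in particular making sure I have not over- or under-counted after passing to the reduced complex ${}^{r}\Omega$, where the ideal generated by $(v_{2m}^2, w_{4m-1})$ has been quotiented out. A monomial like $v_{2m-2}^{k-3}v_{2m}^3$ could naively seem to live in degree $(2m-2)k$, weight $0$, but the relation $v_{2m}^2=0$ in the reduced complex removes it; similarly any $w$-monomial containing $w_{4m-1}$ is gone. So the delicate step is to enumerate, for each of the four degrees and weights, precisely the surviving monomials and confirm the counts $2,6,6,2$; once the bases are correct, the rank computations are a finite, mechanical linear-algebra check over $\mathbb{Q}$ (made uniform in $k$ because for $k>8$ the exponent of $v_{2m-2}$ is the only thing that varies and it never hits a boundary constraint). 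A secondary subtlety is verifying the "otherwise" branch of the $\max\Phi$ lemma genuinely applies at all four spots, i.e. that none of these is the exceptional even-$k$, $\omega=\lfloor k/2\rfloor$ case — which is immediate from $k>8\geq 2\cdot 3+2$.
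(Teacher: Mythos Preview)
Your plan is correct and matches the paper's proof exactly: enumerate explicit monomial bases for the four pieces (getting dimensions $2,6,6,2$), compute $\partial$ on each basis element via the Leibniz rule and the formula for $\partial(w_{2i-1})$, record the resulting small integer matrices, and verify the rank conditions (ranks $2,4,2$ for the three maps) to conclude exactness. One small correction to your exposition: the degrees $(2m-2)k-(3-\omega)$ are \emph{not} the maxima $\max\Phi_{\omega,k,m}$ (those are $(2m-2)k+2,\,(2m-2)k+3,\,(2m-2)k+2,\,(2m-2)k-1$ at $\omega=0,1,2,3$, per Figure~2) but sit a few steps below---which is exactly why the pieces have dimensions $2,6,6,2$ rather than $1$---though this does not affect the actual argument you outline.
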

\begin{proof} First we define the bases elements:
\begin{align*}
{}^{r}\Omega_{k}^{(2m-2)k-3,3}(\text{CP}^{m})=&\langle v_{2m-4}v_{2m-2}^{k-8}v_{2m}w_{4m-7}w_{4m-5}w_{4m-3},\,v_{2m-2}^{k-7}v_{2m}w_{4m-9}w_{4m-5}w_{4m-3}\rangle,\\
{}^{r}\Omega_{k}^{(2m-2)k-2,2}(\text{CP}^{m})=&\langle v_{2m-2}^{k-5}v_{2m}w_{4m-7}w_{4m-5},\,v_{2m-4}v_{2m-2}^{k-6}v_{2m}w_{4m-7}w_{4m-3},\\
&v_{2m-2}^{k-4}w_{4m-7}w_{4m-3},\,v_{2m-2}^{k-5}v_{2m}w_{4m-9}w_{4m-3},\\
&v_{2m-6}v_{2m-2}^{k-6}v_{2m}w_{4m-5}w_{4m-3},\,v_{2m-4}^{2}v_{2m-2}^{k-7}v_{2m}w_{4m-5}w_{4m-3} \rangle,\\
{}^{r}\Omega_{k}^{(2m-2)k-1,1}(\text{CP}^{m})=&\langle v_{2m-4}v_{2m-2}^{k-4}v_{2m}w_{4m-5},\,v_{2m-6}v_{2m-2}^{k-4}v_{2m}w_{4m-3},\, v_{2m-2}^{k-3}v_{2m}w_{4m-7},\\ 
&v_{2m-2}^{k-2}w_{4m-5},\,
 v_{2m-4}v_{2m-2}^{k-3}w_{4m-3},\, v_{2m-4}^{2}v_{2m-2}^{k-5}v_{2m}w_{4m-3}\rangle,\\
{}^{r}\Omega_{k}^{(2m-2)k,0}(\text{CP}^{m})=&\langle v_{2m-4}v_{2m-2}^{k-2}v_{2m},\,v_{2m-2}^{k} \rangle.
\end{align*}
The differential $\partial$ is defined on the bases elements as:
\begin{align*}
\partial(v_{2m-4}v_{2m-2}^{k-8}v_{2m}w_{4m-7}w_{4m-5}w_{4m-3})=&2v_{2m-4}^{2}v_{2m-2}^{k-7}v_{2m}w_{4m-5}w_{4m-3}-\\
-&v_{2m-4}v_{2m-2}^{k-6}v_{2m}w_{4m-7}w_{4m-3},\\
\partial(v_{2m-2}^{k-7}v_{2m}w_{4m-9}w_{4m-5}w_{4m-3})=&2v_{2m-6}^{2}v_{2m-2}^{k-6}v_{2m}w_{4m-5}w_{4m-3}-\\
-&v_{2m-2}^{k-5}v_{2m}w_{4m-9}w_{4m-3},\\
\partial(v_{2m-2}^{k-5}v_{2m}w_{4m-7}w_{4m-5})=&2v_{2m-4}v_{2m-2}^{k-4}v_{2m}w_{4m-5}-v_{2m-2}^{k-3}v_{2m}w_{4m-7},\\
\partial(v_{2m-4}v_{2m-2}^{k-6}v_{2m}w_{4m-7}w_{4m-3})=&2v_{2m-4}^{2}v_{2m-2}^{k-5}v_{2m}w_{4m-3},\\
\partial(v_{2m-2}^{k-4}w_{4m-7}w_{4m-3})=&2v_{2m-6}v_{2m-2}^{k-4}v_{2m}w_{4m-3}+2v_{2m-4}v_{2m-2}^{k-3}w_{4m-3}-\\
-&2v_{2m-2}^{k-3}v_{2m}w_{4m-7},\\
\partial(v_{2m-2}^{k-5}v_{2m}w_{4m-9}w_{4m-3})=&2v_{2m-6}v_{2m-2}^{k-4}v_{2m}w_{4m-3}+v_{2m-4}^{2}v_{2m-2}^{k-5}v_{2m}w_{4m-3},\\
\partial(v_{2m-6}v_{2m-2}^{k-6}v_{2m}w_{4m-5}w_{4m-3})=&v_{2m-6}v_{2m-2}^{k-4}v_{2m}w_{4m-3},\\
\partial(v_{2m-4}^{2}v_{2m-2}^{k-7}v_{2m}w_{4m-5}w_{4m-3})=&2v_{2m-4}^{2}v_{2m-2}^{k-5}v_{2m}w_{4m-3},\\
\partial(v_{2m-4}v_{2m-2}^{k-4}v_{2m}w_{4m-5})=&v_{2m-4}v_{2m-2}^{k-2}v_{2m},\\
\partial(v_{2m-6}v_{2m-2}^{k-4}v_{2m}w_{4m-3})=&0,\\
\partial(v_{2m-2}^{k-3}v_{2m}w_{4m-7})=&2v_{2m-4}v_{2m-2}^{k-2}v_{2m},\\
\partial(v_{2m-2}^{k-2}w_{4m-5})=&2v_{2m-4}^{2}v_{2m-2}^{k-2}v_{2m}+v_{2m-2}^{k},\\
\partial(v_{2m-4}v_{2m-2}^{k-3}w_{4m-3})=&v_{2m-4}v_{2m-2}^{k-2}v_{2m},\\
\partial(v_{2m-4}^{2}v_{2m-2}^{k-5}v_{2m}w_{4m-3})=&0,\\
\partial(v_{2m-4}v_{2m-2}^{k-2}v_{2m})=&0,\\
\partial(v_{2m-2}^{k} )=&0.
\end{align*}
Note that the monomial $v_{2m}^{a}$ is zero in ${}^{r}\Omega_{k}^{*,*}(\text{CP}^{m})$ for $a>1.$ The matrices of the differentials are following:
\begin{equation*}
M_{(2m-2)k-3,3}=
\begin{pmatrix}
0 &-1 &0 &0 &0 &2\\
0&0 &0&-1 &2&0
\end{pmatrix},
\quad
M_{(2m-2)k-2,2}=
\begin{pmatrix}
\hfil 2&0&\ \llap{-}1&0 &\hfil 0 &\hfil 0\\
\hfil 0&0 &\hfil 0&0&\hfil 0&\hfil 2\\
\hfil 0&2 &\ \llap{-}2 &0 &\hfil 2&\hfil 0 \\
\hfil 0&2 &\hfil 0 &0 &0 &\hfil 1\\
\hfil 0& 1&\hfil 0& 0&\hfil 0&\hfil 0\\
\hfil 0&0 &\hfil 0&0 &\hfil 0&\hfil 1

\end{pmatrix}
\end{equation*}
\begin{equation*}
M_{(2m-2)k-1,1}=
\begin{pmatrix}
\hfil 1&\hfil 0\\
\hfil 0&\hfil 0\\
\hfil 2&\hfil 0\\
\hfil 2&\hfil 1\\
\hfil 1&\hfil 0\\
\hfil 0&\hfil 0

\end{pmatrix}.
\end{equation*}
We see that the differential $\partial:{}^{r}\Omega_{k}^{(2m-2)k-3,3}(\text{CP}^{m})\rightarrow {}^{r}\Omega_{k}^{(2m-2)k-2,2}(\text{CP}^{m})$ is injective. Also, the differential $\partial:{}^{r}\Omega_{k}^{(2m-2)k-1,1}(\text{CP}^{m})\rightarrow {}^{r}\Omega_{k}^{(2m-2)k,0}(\text{CP}^{m})$ is surjective. The dimensions of the kernal and the image of the map $\partial:{}^{r}\Omega_{k}^{(2m-2)k-2,2}(\text{CP}^{m})\rightarrow {}^{r}\Omega_{k}^{(2m-2)k-1,1}(\text{CP}^{m})$ are respectively 2 and 4. From these computations, we conclude that the sub-complex:
$$0\longrightarrow\underset{[2]}{{}^{r}\Omega_{k}^{(2m-2)k-3,3}(\text{CP}^{m})}\longrightarrow\underset{[6]}{{}^{r}\Omega_{k}^{(2m-2)k-2,2}(\text{CP}^{m})}\longrightarrow \underset{[6]}{{}^{r}\Omega_{k}^{(2m-2)k-1,1}(\text{CP}^{m})}\longrightarrow$$ $$\longrightarrow \underset{[2]}{{}^{r}\Omega_{k}^{(2m-2)k,0}(\text{CP}^{m})}\longrightarrow 0.$$
is exact. 

\end{proof}
\begin{lemma} \label{lemma2}
For each $m\geq4$ and $k>8,$ we have exact sequence 
$$0\longrightarrow\underset{[1]}{{}^{r}\Omega_{k}^{(2m-2)k-1,3}(\emph{CP}^{m})}\longrightarrow\underset{[3]}{{}^{r}\Omega_{k}^{(2m-2)k,2}(\emph{CP}^{m})}\longrightarrow \underset{[3]}{{}^{r}\Omega_{k}^{(2m-2)k+1,1}(\emph{CP}^{m})}\longrightarrow$$ $$\longrightarrow \underset{[1]}{{}^{r}\Omega_{k}^{(2m-2)k+2,0}(\emph{CP}^{m})}\longrightarrow 0.$$
\end{lemma}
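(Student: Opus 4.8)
The plan is to follow the proof of Lemma \ref{lemma1} verbatim, one diagonal higher: exhibit explicit monomial bases for the four terms, compute the differential on each basis vector, record the three matrices, and read off exactness from a rank count. Since $k>8$ every exponent occurring below is positive, so the monomials in question are genuine nonzero elements of ${}^{r}\Omega_{k}^{*,*}(\text{CP}^{m})$ (bearing in mind $v_{2m}^{2}=0$ and $w_{4m-1}=0$ in the reduced complex). Because $(2m-2)k-1=\max\Phi_{3,k,m}$ and $(2m-2)k+2$ is the top degree in weight $0$, the outer terms are one-dimensional, spanned by $v_{2m-2}^{k-7}v_{2m}w_{4m-7}w_{4m-5}w_{4m-3}$ and by $v_{2m-2}^{k-1}v_{2m}$ respectively. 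In weights $2$ and $1$ the target degrees lie two below the corresponding maxima, so those terms are three-dimensional; enumerating the ways to lower the degree by $2$ from the top monomial gives
\begin{align*}
{}^{r}\Omega_{k}^{(2m-2)k,2}(\text{CP}^{m})&=\langle\, v_{2m-2}^{k-5}v_{2m}w_{4m-7}w_{4m-3},\ v_{2m-2}^{k-4}w_{4m-5}w_{4m-3},\ v_{2m-4}v_{2m-2}^{k-6}v_{2m}w_{4m-5}w_{4m-3}\,\rangle,\\
{}^{r}\Omega_{k}^{(2m-2)k+1,1}(\text{CP}^{m})&=\langle\, v_{2m-2}^{k-3}v_{2m}w_{4m-5},\ v_{2m-2}^{k-2}w_{4m-3},\ v_{2m-4}v_{2m-2}^{k-4}v_{2m}w_{4m-3}\,\rangle.
\end{align*}

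The second step is to compute $\partial$ on each basis vector, using $\partial(w_{4m-3})=2v_{2m-2}v_{2m}$, $\partial(w_{4m-5})=2v_{2m-4}v_{2m}+v_{2m-2}^{2}$, $\partial(w_{4m-7})=2v_{2m-6}v_{2m}+2v_{2m-4}v_{2m-2}$, the Leibniz rule for $\partial$, and repeatedly the relation $v_{2m}^{2}=0$, which annihilates every term in which a monomial already carrying $v_{2m}$ acquires a second factor of $v_{2m}$. With the bases ordered as above, this produces the matrix $\begin{pmatrix}-1&0&2\end{pmatrix}$ for $\partial$ out of weight $3$, the $3\times3$ matrix with rows $(0,0,2)$, $(-2,1,2)$, $(0,0,1)$ for $\partial$ out of weight $2$, and $\begin{pmatrix}1&2&0\end{pmatrix}^{\mathsf{T}}$ for $\partial$ out of weight $1$.

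Exactness then follows by inspection of these matrices. The first is nonzero, so $\partial$ is injective on the weight-$3$ term; the last has rank $1$, so $\partial$ surjects onto the one-dimensional weight-$0$ term. The middle matrix has rank $2$, so its kernel has dimension $1$ and its image dimension $2$; since $\partial^{2}=0$, the image of the weight-$3$ differential (dimension $1$) lies in, and hence equals, that kernel, and the image of the weight-$2$ differential (dimension $2$) lies in, and hence equals, the $2$-dimensional kernel of the weight-$1$ differential. This yields the exact sequence with dimensions $1,3,3,1$. The only labor is in the first step — correctly listing all monomials of the prescribed weight and degree and checking that none is omitted and that the relations $v_{2m}^{2}=0$, $w_{4m-1}=0$ collapse none of them — together with the sign and vanishing bookkeeping of the second; there is no conceptual obstacle, the argument being a direct transcription of the proof of Lemma \ref{lemma1}.
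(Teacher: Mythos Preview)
Your proposal is correct and follows essentially the same approach as the paper: both enumerate explicit monomial bases for the four terms, compute the differential on each basis vector, record the resulting matrices, and deduce exactness from a rank count (your matrices differ from the paper's only by the ordering of basis elements). Your additional justification of the dimensions via $\max\Phi_{\omega,k,m}$ and the enumeration of degree-$2$ drops from the top monomial is a nice touch that the paper leaves implicit.
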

\begin{proof} We define the bases elements:
\begin{align*}
{}^{r}\Omega_{k}^{(2m-2)k-1,3}(\text{CP}^{m})=&\langle v_{2m-2}^{k-7}v_{2m}w_{4m-7}w_{4m-5}w_{4m-3}\rangle,\\
{}^{r}\Omega_{k}^{k(2m-2),2}(\text{CP}^{m})=&\langle v_{2m-2}^{k-5}v_{2m}w_{4m-7}w_{4m-3}, v_{2m-4}v_{2m-2}^{k-6}v_{2m}w_{4m-5}w_{4m-3},\\
& v_{2m-2}^{k-4}w_{4m-5}w_{4m-3}\rangle,\\
{}^{r}\Omega_{k}^{k(2m-2)+1,1}(\text{CP}^{m})=&\langle v_{2m-4}v_{2m-2}^{k-4}v_{2m}w_{4m-3}, v_{2m-2}^{k-3}v_{2m}w_{4m-5}, v_{2m-2}^{k-2}w_{4m-3}\rangle,\\
{}^{r}\Omega_{k}^{(2m-2)k+2,0}(\text{CP}^{m})=&\langle v_{2m-2}^{k-1}v_{2m}\rangle.
\end{align*}
The differential $\partial$ is defined on the bases elements as:
\begin{align*}
\partial(v_{2m-2}^{k-7}v_{2m}w_{4m-7}w_{4m-5}w_{4m-3})=&2v_{2m-4}^{2}v_{2m-2}^{k-6}v_{2m}w_{4m-5}w_{4m-3}-\\
-&v_{2m-2}^{k-5}v_{2m}w_{4m-7}w_{4m-3},\\
\partial(v_{2m-2}^{k-5}v_{2m}w_{4m-7}w_{4m-3})=&2v_{2m-4}v_{2m-2}^{k-4}v_{2m}w_{4m-3},\\
\partial(v_{2m-4}v_{2m-2}^{k-6}v_{2m}w_{4m-5}w_{4m-3})=&v_{2m-4}v_{2m-2}^{k-4}v_{2m}w_{4m-3},\\
\partial(v_{2m-2}^{k-4}w_{4m-5}w_{4m-3})=&2v_{2m-4}v_{2m-2}^{k-3}v_{2m}w_{4m-3}+v_{2m-2}^{k-2}w_{4m-3}-\\
-&2v_{2m-2}^{k-3}v_{2m}w_{4m-5},\\
\partial(v_{2m-4}v_{2m-2}^{k-4}v_{2m}w_{4m-3})=&0,\\
\partial(v_{2m-2}^{k-3}v_{2m}w_{4m-5})=&v_{2m-2}^{k-1}v_{2m},\\
\partial(v_{2m-2}^{k-2}w_{4m-3})=&2v_{2m-2}^{k-1}v_{2m},\\
\partial(v_{2m-2}^{k-1}v_{2m})=&0.
\end{align*}

The matrices of the differentials are following:
\begin{equation*}
M_{(2m-2)k-1,3}=
\begin{pmatrix}
\ \llap{-}1&\hfil  2&\hfil 0

\end{pmatrix},
\quad
M_{(2m-2)k,2}=
\begin{pmatrix}
\hfil 2&0&\hfil 0 \\
\hfil 1&0 &0 \\
\hfil 2&\ \llap{-}2&\hfil 1

\end{pmatrix},
\quad
M_{(2m-2)k+1,1}=
\begin{pmatrix}
0 \\
1\\
2
\end{pmatrix}.
\end{equation*}
We see that the differential $\partial:{}^{r}\Omega_{k}^{(2m-2)k-1,3}(\text{CP}^{m})\rightarrow {}^{r}\Omega_{k}^{(2m-2)k,2}(\text{CP}^{m})$ is injective. Also, the differential $\partial:{}^{r}\Omega_{k}^{(2m-2)k+1,1}(\text{CP}^{m})\rightarrow {}^{r}\Omega_{k}^{(2m-2)k+2,0}(\text{CP}^{m})$ is surjective. The dimensions of the kernal and the image of the map $\partial:{}^{r}\Omega_{k}^{(2m-2)k,2}(\text{CP}^{m})\rightarrow {}^{r}\Omega_{k}^{(2m-2)k+1,1}(\text{CP}^{m})$ are respectively 1 and 2. From these computations, we conclude that the sub-complex:
$$0\longrightarrow\underset{[1]}{{}^{r}\Omega_{k}^{(2m-2)k-1,3}(\text{CP}^{m})}\longrightarrow\underset{[3]}{{}^{r}\Omega_{k}^{(2m-2)k,2}(\text{CP}^{m})}\longrightarrow \underset{[3]}{{}^{r}\Omega_{k}^{(2m-2)k+1,1}(\text{CP}^{m})}\longrightarrow$$ $$\longrightarrow \underset{[1]}{{}^{r}\Omega_{k}^{(2m-2)k+2,0}(\text{CP}^{m})}\longrightarrow 0.$$
is exact. 
\end{proof}
\begin{lemma}\label{lemma3}
For each $m\geq4$ and $k>8,$ the cohomology group of degree $(2m-2)k-2$ is vanish in the subcomplex
$$\ldots\longrightarrow\underset{[8]}{{}^{r}\Omega_{k}^{(2m-2)k-3,1}(\emph{CP}^{m})}\longrightarrow \underset{[3]}{{}^{r}\Omega_{k}^{(2m-2)k-2,0}(\emph{CP}^{m})}\longrightarrow 0.$$
\end{lemma}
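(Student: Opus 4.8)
The subcomplex displayed in the statement is the summand of $({}^{r}\Omega_{k}^{*,*}(\text{CP}^{m}),\partial)$ consisting of the bidegrees with $a+b=(2m-2)k-2$; because weight $-1$ does not occur, its rightmost nonzero term is ${}^{r}\Omega_{k}^{(2m-2)k-2,0}(\text{CP}^{m})$, so the cohomology in degree $(2m-2)k-2$ equals $\operatorname{coker}\bigl(M_{(2m-2)k-3,1}\bigr)$, the cokernel of $\partial\colon {}^{r}\Omega_{k}^{(2m-2)k-3,1}(\text{CP}^{m})\to {}^{r}\Omega_{k}^{(2m-2)k-2,0}(\text{CP}^{m})$. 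The plan is therefore to show that this map is surjective.

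First I would pin down the target. A weight-$0$, degree-$(2m-2)k-2$ element is a length-$k$ monomial in the $v_{2a}$'s with at most one factor $v_{2m}$; writing $b_{i}=m-a_{i}$, the degree constraint becomes $\sum b_{i}=k+1$ with at most one $b_{i}=0$, and a short enumeration leaves exactly the three monomials
$$c_{1}=v_{2m-6}v_{2m-2}^{k-2}v_{2m},\qquad c_{2}=v_{2m-4}^{2}v_{2m-2}^{k-3}v_{2m},\qquad c_{3}=v_{2m-4}v_{2m-2}^{k-1},$$
consistent with the bracketed dimension $[3]$. I would enumerate the source basis the same way and, if desired, write out the full matrix $M_{(2m-2)k-3,1}$ using $\partial(w_{2i-1})=\sum_{a+b=i,\,0\le a,b\le m}v_{2a}v_{2b}$ and the relation $v_{2m}^{2}=0$ in the reduced complex; but for surjectivity it suffices to hit $c_{1},c_{2},c_{3}$. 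Using $\partial(w_{4m-3})=2v_{2m-2}v_{2m}$, $\partial(w_{4m-5})=2v_{2m-4}v_{2m}+v_{2m-2}^{2}$ and $\partial(w_{4m-7})=2v_{2m-6}v_{2m}+2v_{2m-4}v_{2m-2}$ (all valid since $m\ge4$), one gets
$$\partial(v_{2m-4}^{2}v_{2m-2}^{k-4}w_{4m-3})=2c_{2},\qquad \partial(v_{2m-4}v_{2m-2}^{k-3}w_{4m-5})=2c_{2}+c_{3},\qquad \partial(v_{2m-2}^{k-2}w_{4m-7})=2c_{1}+2c_{3},$$
where the $v_{2m}^{2}$-terms die in the reduced complex; the $3\times 3$ coefficient matrix of these images with respect to $(c_{1},c_{2},c_{3})$ has determinant $4\ne 0$, so they span the target. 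All three source monomials exist whenever $k\ge 4$, hence for $k>8$, so $M_{(2m-2)k-3,1}$ is surjective and the cohomology of the subcomplex in degree $(2m-2)k-2$ vanishes.

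The only real work is the combinatorial bookkeeping: correctly listing the source basis (the $v_{2m}^{2}=0$ reduction and the narrow band of $w$-degrees that can occur at this bidegree are both easy to mishandle) and tracking which monomials survive after reduction. Once the three differentials above are in hand, surjectivity — and hence the claimed vanishing — is immediate, so I do not expect a genuine obstacle beyond that bookkeeping.
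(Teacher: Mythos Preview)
Your argument is correct and follows the same strategy as the paper: identify the three target monomials $c_{1},c_{2},c_{3}$ and show that $\partial$ surjects onto them. The paper carries this out by listing a full source basis and computing the entire matrix $M_{(2m-2)k-3,1}$, whereas you shortcut this by exhibiting three specific weight-$1$ monomials whose images are visibly independent; the computations you give are accurate and the $3\times 3$ determinant is indeed $4$. One amusing point: the element $v_{2m-4}^{2}v_{2m-2}^{k-4}w_{4m-3}$ you use is \emph{not} in the paper's list --- the paper in fact omits this and two further monomials ($v_{2m-6}v_{2m-2}^{k-3}w_{4m-3}$ and $v_{2m-4}^{3}v_{2m-2}^{k-6}v_{2m}w_{4m-3}$), so the bracketed source dimension should read $[11]$ rather than $[8]$. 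This does not affect either proof, since both conclude by surjectivity, but it vindicates your remark that the source bookkeeping is the one place to be careful.
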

\begin{proof}
We define the bases elements:
\begin{align*}
{}^{r}\Omega_{k}^{(2m-2)k-3,1}(\text{CP}^{m})=&\langle v_{2m-8}v_{2m-2}^{k-4}v_{2m}w_{4m-3},\,v_{2m-6}v_{2m-2}^{k-4}v_{2m}w_{4m-5},\,v_{2m-2}^{k-3}v_{2m}w_{4m-9},\\
&v_{2m-4}v_{2m-2}^{k-4}v_{2m}w_{4m-7},\,v_{2m-2}^{k-2}w_{4m-7},\,v_{2m-4}v_{2m-2}^{k-3}w_{4m-5},\\
&v_{2m-4}^{2}v_{2m-2}^{k-5}v_{2m}w_{4m-5},\,v_{2m-6}v_{2m-4}v_{2m-2}^{k-5}v_{2m}w_{4m-3}\rangle,\\
{}^{r}\Omega_{k}^{(2m-2)k-2,0}(\text{CP}^{m})=&\langle v_{2m-6}v_{2m-2}^{k-2}v_{2m},\, v_{2m-4}^{2}v_{2m-2}^{k-3}v_{2m},\,v_{2m-4}v_{2m-2}^{k-1}\rangle.
\end{align*}
The differential $\partial$ is defined on the bases elements as:
\begin{align*}
\partial(v_{2m-8}v_{2m-2}^{k-4}v_{2m}w_{4m-3})=&0,\\
\partial(v_{2m-6}v_{2m-2}^{k-4}v_{2m}w_{4m-5})=&v_{2m-6}v_{2m-2}^{k-2}v_{2m},\\
\partial(v_{2m-2}^{k-3}v_{2m}w_{4m-9})=&2v_{2m-6}v_{2m-2}^{k-2}v_{2m}+v_{2m-4}^{2}v_{2m-2}^{k-3}v_{2m},\\
\partial(v_{2m-4}v_{2m-2}^{k-4}v_{2m}w_{4m-7})=&2v_{2m-4}^{2}v_{2m-2}^{k-3}v_{2m},\\
\partial(v_{2m-2}^{k-2}w_{4m-7})=&2v_{2m-6}v_{2m-2}^{k-2}v_{2m}+2v_{2m-4}v_{2m-2}^{k-1},\\
\partial(v_{2m-4}v_{2m-2}^{k-3}w_{4m-5})=&2v_{2m-4}^{2}v_{2m-2}^{k-3}v_{2m}+v_{2m-4}v_{2m-2}^{k-1},\\
\partial(v_{2m-4}^{2}v_{2m-2}^{k-5}v_{2m}w_{4m-5})=&v_{2m-4}^{2}v_{2m-2}^{k-3}v_{2m},\\
\partial(v_{2m-6}v_{2m-4}v_{2m-2}^{k-5}v_{2m}w_{4m-3})=&0,\\
\partial(v_{2m-6}v_{2m-2}^{k-2}v_{2m})=&0,\\
\partial(v_{2m-4}^{2}v_{2m-2}^{k-3}v_{2m})=&0,\\
\partial(v_{2m-4}v_{2m-2}^{k-1})=&0.
\end{align*}
The matrix of the differential is following:
\begin{equation*}
M_{(2m-2)k-3,1}=
\begin{pmatrix}
\hfil 0&0&\hfil 0 \\
1&0 &0 \\
\hfil 2&1&\hfil 0 \\
0&2 &0 \\
\hfil 2&0&\hfil 2 \\
0&2 &1 \\
0&1 &0 \\
\hfil 0&0 &\hfil 0

\end{pmatrix}
\end{equation*}
We see that the differential $\partial:{}^{r}\Omega_{k}^{(2m-2)k-3,1}(\text{CP}^{m})\rightarrow {}^{r}\Omega_{k}^{(2m-2)k-2,0}(\text{CP}^{m})$ is surjective. Hence the cohomology group of degree $(2m-2)k-2$ is vanish in sub-complex:
$$\ldots\longrightarrow\underset{[8]}{{}^{r}\Omega_{k}^{(2m-2)k-3,1}(\text{CP}^{m})}\longrightarrow \underset{[3]}{{}^{r}\Omega_{k}^{(2m-2)k-2,0}(\text{CP}^{m})}\longrightarrow 0.$$

\end{proof}
\textit{Proof of Theorem \ref{main1}.} Let $m\geq 4$ and $k$ is sufficiently large bigger than 8. There is no element of degree higher than $k(2m-2)+3$ in reduced complex $({}^{r}\Omega_{k}^{*,*}(\text{CP}^{m}),\partial).$ Also, the highest degree element of wight $\omega\geq4$ in $({}^{r}\Omega_{k}^{*,*}(\text{CP}^{m}),\partial)$ is $v_{2m-2}^{k-9}v_{2m}w_{4m-9}w_{4m-7}w_{4m-5}w_{4m-3}.$ The degree of monomial $v_{2m-2}^{k-9}v_{2m}w_{4m-9}w_{4m-7}w_{4m-5}w_{4m-3}$ is $(2m-2)k-6.$ Therefore, we just focus on the weights $\omega<4.$ The elements of degrees $k(2m-2),$ $k(2m-2)-1,$ $k(2m-2)-2$ and $k(2m-2)-3$ are concentrated in weights $\omega\leq3.$ The complete proof follows from Lemmas \ref{lemma1}, \ref{lemma2} and \ref{lemma3}.\\\\
\textit{Proof of Corollary \ref{corollarymain}.} It is obvious from Theorem \ref{main1}.

 $\hfill \square$\\
\section{Proof of Theorem \ref{main2}}
In this section, we give the proof of Theorem \ref{main2}.\\\\
\textit{Proof of Theorem \ref{main2}.} Let $k=3$ and $m\geq4.$ We take $$\alpha=\sum_{j=1}^{m}(2m-3j)v_{2j}w_{4m-(2j+1)}\in {}^{r}\Omega_{k}^{4m-1,1}(\text{CP}^{m}).$$
By using the linearity property of differential, we get
$$\partial(\alpha)=\sum_{j=1}^{m}(2m-3j)\partial(v_{2j}w_{4m-(2j+1)}).$$
Also, by using the definition and the leibniz rule of differential, we have 
$$\partial(\alpha)=\sum_{j=1}^{m}[(2m-3j)v_{2j}(\sum_{\substack{a+b=m-j \\ 0\leq a, b\leq m}}v_{2a}v_{2b})]=0.$$
This shows that $\alpha$ is cocycle. For each $a\in\{2,4,\ldots,2m\},$ we take
$$\beta_{a,k}=v_{a}^{k-3}\alpha\in {}^{r}\Omega_{k}^{(k-3)a+4m-1,1}(\text{CP}^{m}).$$
Clearly, $\beta_{a,k}$ is also cocycle for $k\geq3.$ Now, we want to show that $\beta_{a,k}$ is not coboundary for each $a\in\{2,4,\ldots,2\lceil\frac{m}{2}\rceil-2\}.$ We can write $$\beta_{a,k}=\left(2m-\frac{3a}{2}\right)v_{a}^{k-2}w_{4m-(a+1)}+\sum_{\substack{j=1 \\ j\neq\frac{a}{2}}}^{m}(2m-3j)v_{a}^{k-3}v_{2j}w_{4m-(2j+1)}.$$
For $k\geq5,$ the general element of bases in weight 2 is written as $$\gamma=v_{i_{1}}\ldots v_{i_{k-4}}w_{j}w_{l},$$
where $i_{1},\ldots,i_{k-4}\in \{0,2,\ldots,2m\}$ and $j,l\in\{2m-1,\ldots,4m-3\}.$ For each $a\in\{2,4,\ldots,2\lceil\frac{m}{2}\rceil-2\},$ the power of $v_{a}$ in the image of $w_{j}$ and $w_{l}$ is at most one. Therefore, the power of  $v_{a}$ in the image of $\gamma$ is at most $k-3.$ This implies that $\beta_{a,k}$ is a permanent cocycle and never coboundary. Hence, for each $m\geq4,$ $k\geq5$ and $a\in\{2,4,\ldots,2\lceil\frac{m}{2}\rceil-2\},$ we have non-vanishing
$$H^{a(k-3)+4m-1}(\mathscr{C}_{k}(\text{CP}^{m});\mathbb{Q})\neq0.$$
\section{Final remark}
With more careful analysis, one can improve the limiting extremal vanishing range. However, the optimal range is unclear, and we ask the following.\\\\
\textbf{Question.} What is the smallest value of $i\in\mathbb{Z}$ such that
$$\displaystyle{\lim_{k \to \infty}}\text{dim}(H^{(2m-2)k+i}(\mathscr{C}_{k}(\text{CP}^{m});\mathbb{Q}))\neq0?$$

\noindent\textbf{Acknowledgement}\textit{.} The author gratefully acknowledge the support from the ASSMS GC, university Lahore. This research is partially supported by Higher Education Commission of Pakistan.

\vskip 0,65 true cm

\medskip

\null\hfill  Abdus Salam School of Mathematical Sciences,\\
\null\hfill  GC University Lahore, Pakistan. \\
\null\hfill E-mail: {yameen99khan@gmail.com}


\begin{thebibliography}{999} 
\bibitem{AF} D. Ayala, J. Francis, {\em Factorization homology of topological manifolds},
                                               J. Topol. 8 (2015) 1045–1084. 
                                                    
\bibitem{BMP} B. Berceanu, M. Markl, S. Papadima, {\em Multiplicative models for configuration spaces of algebraic varieties}, Topology \textbf{44, 2} (2005), 415-440.                              
    
\bibitem{C} T.~Church, {\em Homological stability for configuration spaces of manifolds},
                                                    Invent. Math, \textbf{Vol. 188 no. 2} (2012), 465-504.

\bibitem{Co} F. R. Cohen, {\em On configuration spaces, their homology, and Lie algebras},
                                                    J. Pure Appl. Algebra, \textbf{100 (1-3)} (1995), 19-42.
                                                                                              
\bibitem{F-Ta} Y.~F\'{e}lix, D.~Tanr\'{e}, {\em The cohomology algebra of unordered configuration spaces}, J. London Math. Soc, \textbf{Vol. 72 no. 2} (2005), 525-544.

\bibitem{F-Th} Y.~F\'{e}lix, J.~C.~Thomas, {\em Rational Betti numbers of configuration spaces}, Topol. Appl., \textbf{Vol. 102 no. 2} (2000), 139-149.


\bibitem{F-M} W. Fulton, R. MacPherson, {\em A Compactification of Configuration Spaces}, Ann. of Math, \textbf{vol. 139 no. 1} (1994), 183-225.

\bibitem{K} I. Kriz, On the Rational Homotopy type of configuration spaces, Ann. of Math, \textbf{vol. 139 no. 2} (1994), 227-237. 

\bibitem{Kn} B.~Knudsen, {\em Betti numbers and stability for configuration spaces via factorization homology},
Algebr. Geom. Topol, \textbf{ Vol. 17 no. 5} (2017), 3137-3187.

\bibitem{KMT} B.~Knudsen, J. Miller, P. Tosteson {\em Extremal stability for configuration spaces}, preprint arXiv{2109.03855v1}[math AT], (2021).

\bibitem{KMTP} B.~Knudsen, J. Miller, P. Tosteson {\em Extremal stability for configuration spaces}, Math. Ann. (2022).

\bibitem{K-M} A.~Kupers, J.~Miller, {\em Some stable calculations and Osccam's razor for Hodge structures},
                                                    J. Pure Appl. Algebra, \textbf{218 (7)} (2014), 1219-1223.

\bibitem{MD} D.~McDuff, {\em Configuration spaces of positive and negative particles},
                                                                 Topology \textbf{14, 1} (1975), 91-107.
\bibitem{RW} O.~Randal-Williams, {\em Homological stability for unordered cofiguration spaces},
                                                         Q. J. Math. \textbf{64 (1)} (2013), 303-326.
                                                        
\bibitem{RW2} O. Randal-Williams, {\em ``Topological chiral homology" and configuration spaces of spheres},
                                     in {\em Morfisms} (MIMS proceedings issuue), Vol \textbf{17}, No \textbf{2} (2013), 57-70.
                                                             
                                                                 
\bibitem{S} G.~Segal, {\em The topology of spaces of rational function},
                                                  Acta Math., \textbf{143 (1-2)} (1979), 39-72.

\bibitem{VW} R. Vakil, M. M. Wood, {\em Discriminants in Grothendieck Ring},
                                                  Duke Math. J., \textbf{164} no. 6, (2012), 1139-1185.                                              

\bibitem{Y} M. Yameen, A remark on extremal stability of configuration spaces, 
	J. Pure Appl. Algebra. \textbf{Vol. 227} (2023), 107154.




\end{thebibliography}
\end{document}